\documentclass[11pt]{amsart}
 \pdfoutput=1
\usepackage{graphicx}

\usepackage{amssymb}
\usepackage{epstopdf}
\usepackage{amsmath}
\usepackage{amscd}
\usepackage{amsthm,amsfonts,amsxtra,amssymb,amscd}
\usepackage{enumerate}
%\usepackage[active]{srcltx}

%\usepackage{pdfsync}
%\DeclareGraphicsRule{.tif}{png}{.png}{`convert #1 `basename #1 .tif`.png}

%\usepackage{makeidx}
%\usepackage{notebook2e, latexsym}

\usepackage{hyperref}

\theoremstyle{plain}
\newtheorem*{lemma*}{Lemma}
\newtheorem{lemma}[subsection]{Lemma}
\newtheorem*{theorem*}{Theorem}
\newtheorem{theorem}[subsection]{Theorem}
\newtheorem*{proposition*}{Proposition}
\newtheorem{proposition}[subsection]{Proposition}
\newtheorem*{corollary*}{Corollary}

\theoremstyle{definition}
\newtheorem*{definition*}{Definition}
\newtheorem{definition}[subsection]{Definition}
\newtheorem*{example*}{Example}
\newtheorem{example}[subsection]{Example}
\theoremstyle{remark}
\newtheorem*{remark*}{Remark}
\newtheorem{remark}[subsection]{Remark}

%\makeindex

%\frontmatter
\title[Vector partition functions]{ Vector partition functions  and generalized Dahmen-Micchelli spaces}
\author{
C. De Concini,\quad
C. Procesi.\quad M. Vergne}
\thanks{The first two authors are partially supported by the Cofin 40 \%, MIUR}

\begin{document}

\begin{abstract}
This is the first of a series of papers  on partition functions and the index theory of transversally elliptic operators.
In this paper we only discuss algebraic and combinatorial issues related to partition functions. The applications to index theory will  appear in   \cite{dpv}.
\smallskip

Here we introduce  a  space of functions on a lattice which generalizes
the space of quasi--polynomials satisfying the difference
equations associated to cocircuits of a sequence of vectors $X$.
This space $  \mathcal F(X)$ contains the partition function ${\mathcal P }_X
$. We prove a "localization formula" for any $f$ in $\mathcal F(X)$.  In
particular, this implies that the partition function ${\mathcal P}_X $ is a quasi--polynomial on the sets
${\mathfrak c}-B(X)$ where ${\mathfrak c}$ is a big cell and $B(X)$ is the {\it zonotope} generated by the vectors  in $X$.
\end{abstract}\maketitle

\section{Introduction}

\smallskip

Recall  some notions. We take  a lattice $\Gamma$ in a vector
space $V$  and  $X:=[a_1,\dots,a_m]$
a list of  non zero elements of
$\Gamma$, spanning $V$ as vector space.
 If    $X$ generates a
pointed cone $C(X)$,  the  {\it partition function}  ${\mathcal P }_X
(\gamma)$ counts the number of ways in which a vector $\gamma\in
\Gamma$ can be written as $\sum_{i=1}^m k_ia_i$ with
$k_i$ non negative integers.

A quasi--polynomial is a function on $\Gamma$ which coincides with
a polynomial on each coset of some sublattice of finite index  in
$\Gamma$. A  theorem \cite{DM},\cite{SV1}, generalizing the theory
of the Ehrhart polynomials \cite{Er1},  shows that ${\mathcal P
}_X (\gamma)$  is a quasi--polynomial on certain regions
$\mathfrak c-B(X)$,  where $B(X):=\{\sum_{i=1}^mt_ia_i,\ 0\leq
t_i\leq 1\}$  is the {\it zonotope}  generated by $X$  while
$\mathfrak c$ denotes a {\it big cell}, that is a connected
component of the complement in $V$ of the {\it singular vectors}
which are formed by the union of all cones $C(Y)$  for all the
sublists $Y$ of $X$  which do not span $V$.  The  complement of  $C(X)$ is a big cell.  The other cells are inside $C(X)$ and are   convex.

The quasi--polynomials describing the partition function belong to
a remarkable finite dimensional space introduced and described by
Dahmen--Micchelli  \cite{DM} and which in this paper will be
denoted by $DM(X)$. This is the space of solutions of a system  of
difference equations. In order to describe it, let us   call a
subspace $\underline r$ of $V$ {\it rational} if $\underline r$ is
the span of a sublist of $X$. We need to recall that a {\it
cocircuit} $Y$ in $X$  is a sublist of $X$  such that $X\setminus
Y$ does not span $V$ and $Y$ is minimal with this property.  Thus
$Y$ is of the form $Y=X\setminus H$  where $H$ is a rational
hyperplane. Given $a\in\Gamma$, the {\it difference operator}
$\nabla_a$ is the operator on functions defined by
$\nabla_a(f)(b):=f(b)-f(b-a)$. For a list $Y$ of vectors, we set
$\nabla_Y:=\prod_{a\in
Y}\nabla_a$.
% Notice that the action of $\nabla_Y$ on functions is
%given as follows: for every sublist $S\subseteq Y$ set
%$a_S:=\sum_{a\in S}a$  then:
%\begin{equation}\label{recc}
%\nabla_Yf(b)=\sum_{S\subseteq Y}(-1)^{|S|}f(b-a_S).
%\end{equation}
 We   then define:
$$DM(X):=\{f\,|\,  \nabla_Y
f=0,\ \\ \text{ for every  cocircuit $Y$ in }X\}.$$ It is easy to
see that $DM(X)$ is finite dimensional and consists
 of quasi--polynomial
functions  (cf. \cite{dp1}).

In this article,
we introduce
$$\mathcal F(X):=\{f\,|\,  \nabla_{X\setminus  \underline r }
f \text{ is supported on }  \underline r \text{ for every
 rational subspace    } \underline r  \}.
$$
Clearly ${\mathcal P }_X $ as well as $DM(X)$ are contained in
$\mathcal F(X)$. The space $\mathcal F(X)$ is of interest even when $X$ does not
span a pointed cone and occurs  in studying indices of
transversally elliptic operators on a vector space.

The main result of this article is  a ``localization formula'' for
an element $f$ in $\mathcal F(X)$. In particular, given a chamber
$\mathfrak c$, our localization formula allows us to write
explicitly the partition function ${\mathcal P }_X $ as a sum of a
quasi--polynomial function ${\mathcal P }_X ^{\mathfrak c}\in
DM(X)$ and of other functions $f_{\underline r} \in \mathcal F(X)$
supported outside $\mathfrak c-B(X)$.  This allows us to give a
short proof of the quasi--polynomiality of ${\mathcal P }_X $ on
the regions $\mathfrak c-B(X)$. Furthermore this
de\-com\-po\-si\-tion implies Paradan's wall crossing formulae
\cite{P1} for the quasi--polynomials ${\mathcal P }_X ^{\mathfrak
c}$. Our approach is strongly inspired by Paradan's localization
formula in Hamiltonian geometry,  but our methods here are
elementary.
 We wish to thank Michel Duflo  and Paul-Emile Paradan for several suggestions and corrections.

\section{Special functions}
\subsection{Basic notations}

Let $\Gamma$ be a lattice and $E=\mathbb Z,\mathbb Q, \mathbb R,\mathbb C$.
Consider the space ${\mathcal C}_E[\Gamma]$ of $E$ valued functions  on $\Gamma$.  When $E=\mathbb Z$,  we shall simply write ${\mathcal C}[\Gamma]$.
We display such a function $f(\gamma)$ also as a formal series
$$\Theta(f):=\sum_{\gamma\in\Gamma}f(\gamma) e^{ \gamma }.$$
Of course, under suitable convergence conditions, the  series
$\sum_{\gamma\in\Gamma}f(\gamma) e^{ \gamma } $ is a function on
the torus $G$  whose character group is $\Gamma$, and it is the
Laplace--Fourier transform of $f$.  In fact the functions that we
shall study   are Fourier coefficients of some important  generalized functions on $G$.
This fact and the several implications for the index theory of transversally elliptic operators  will be the subject of a subsequent paper \cite{dpv}.

The space ${\mathcal C}_E[\Gamma]$ is in an obvious way  a module over  the  group
algebra $E[\Gamma]$, multiplication by $e^\lambda$ on the
series $\Theta(f)$ corresponding to the translation operator
$\tau_\lambda$ defined by $$ (\tau_\lambda
f)(\gamma):=f(\gamma-\lambda)$$  on the function $f$. Thus
multiplication by $1-e^\lambda$ corresponds to the difference operator
$\nabla_\lambda $.

We denote by $\delta_0$ the function on $\Gamma$ identically equal
to $0$ on $\Gamma$,  except  for $\delta_0(0)=1$.  Remark that the
product of two formal series $\Theta(f_1)\Theta(f_2)$, whenever it
is defined, corresponds to {\it convolution} $f_1*f_2$ of the
functions $f_1$ and $f_2$. The function $\delta_0$ is the unit
element.\smallskip

\begin{remark}
Notice that, for a difference operator $\nabla_a$ acting on a
convolution, we have:
$$\nabla_a(f_1*f_2)=\nabla_a(f_1)*f_2=f_1*\nabla_a(f_2 ).$$
\end{remark}

Let now $X:=[a_1,\dots,a_m]$ be a list of non zero elements of
$\Gamma$ and let $V:=\Gamma\otimes_{\mathbb Z}\mathbb R$ be the
real vector space generated by $\Gamma$. We assume that $X$
generates the vector space $V$,  but we do not necessarily assume that $X$
generates a pointed cone in $V$.

If $X$ generates a pointed cone, then we can define
$$\Theta_X=\prod_{a\in X}\sum_{k=0}^\infty e^{ka}.$$
We write $$\Theta_X=\sum_{\gamma\in \Gamma}{\mathcal P }_X
(\gamma)e^{\gamma}$$ where  ${\mathcal P }_X \in {\mathcal C} [\Gamma]$ is the partition
function. `` Morally'' the series $\Theta_X$ is equal to
$\prod_{a\in X}\frac{1}{1-e^a}$, but $\frac{1}{1-e^a}$ has to be
understood  as the geometric series expansion $\sum_{k=0}
^{\infty}e^{ka}$.
\begin{remark}
We easily see that the partition
function satisfies the difference equation $\nabla_X{\mathcal P }_X =\delta_0$.
 Clearly this equation has infinitely many solutions.
 The fact that ${\mathcal P }_X $  is uniquely determined
 by the recursion expressed by this equation
 comes from the further property of this solution of
  {\it having support in the cone $C(X)$}.  We shall see other functions of the same type appearing in this paper.
\end{remark}

\begin{definition}\begin{enumerate}\item
A subspace of $V$ generated by a subset of the  elements of $X$
will be called {\bf rational  (relative to $X$)}. \item Given a
rational subspace $\underline r$,
 we denote  by ${\mathcal C}[\Gamma,
\underline r]$  the set of elements in ${\mathcal C}[\Gamma]$ which have
support  in the lattice $\Gamma\cap \underline r$.

\item
Given a rational subspace $\underline r$,
we set  $\nabla_{X\setminus\underline r}:=\prod_{a\in X\setminus \underline
r}\nabla_a.$
\end{enumerate}\end{definition}

With these notations, the space $DM_E(X)$  defined by
Dahmen--Micchelli   is formed  by the set of functions $f\in
{\mathcal C}_E[\Gamma]$  satisfying the system of difference equations
$\nabla_{X\setminus\underline r}f=0$ as ${\underline r}$ varies
among all proper  rational  subspaces relative to $X$. It is easy
to see that $ DM_E(X)$ consists of quasi--polynomials.

It  follows from their theory  (see also \cite{dp1})  that,  for each $E$, the space $DM_E(X)$ is a free $E-$module of dimension $\delta(X)$,  the volume of the zonotope $B(X)$.  In particular   $DM_E(X)=E\otimes _{\mathbb Z}DM_{\mathbb Z}(X)$ for all $E$.  Therefore from now on we shall work directly over $\mathbb Z$ and drop the subscript $E$.

The smallest  sub--lattice of $\Gamma$  for which each function of
$DM(X)$ is a polynomial  on its cosets  is the
 intersection  of all the sublattices of $\Gamma$ generated by  all
 the bases of $V$ that  one can extract from $X$  (the least common
 multiple).

Given a rational subspace $\underline r$, we  will identify  the
space ${\mathcal C}[\Gamma\cap \underline r]$  with the subspace ${\mathcal C}[\Gamma,
\underline r]$ of ${\mathcal C}[\Gamma]$  by extending the functions with 0
outside $\underline r$.

\subsection{The special functions  ${\mathcal P }_{X\setminus \underline r}^{F_{\underline r}}$.}
Given a rational subspace $\underline r$,    $X\setminus \underline r$ defines a hyperplane arrangement in the space   $\underline r^\perp\subset U$ orthogonal to $\underline r$. Take an open  face $F_{\underline r}$ in   $\underline r^\perp$ with respect to this hyperplane arrangement. We shall call such a face a regular face for $X\setminus \underline r$. If
$F_{\underline r}$ is a regular face, then $-F_{\underline r}$ is also a regular face.
  By definition a vector  $u\in \underline r^\perp$ and such that $\langle u, a\rangle \neq 0$ for all $a \in X\setminus \underline r$
lies in a unique such regular face $F_{\underline r}$ and $u$  will be called a regular vector for $X\setminus\underline r$.

Given a regular face $F_{\underline r}$ for
$X\setminus \underline r$,  we divide $X\setminus \underline r$ into two parts $A,B$,   according to whether they take positive or negative values on our face. We denote the cone  $C(A,-B)$,   generated
by the list  $[A,-B]$, by $C(F_{\underline r},X\setminus\underline r)$.

 If we take $u\in F_{\underline r}$,  $C(F_{\underline r},X\setminus\underline r)$ is contained in the
closed half space of vectors $v$ where $u$ is non negative.

We are going to consider the series $\Theta_{X\setminus\underline r}^{F_{\underline r}}$ which is
characterized by the following two properties:
\begin{lemma}\label{lafunteta}
There exists a unique element  $$\Theta_{X\setminus\underline r}^{F_{\underline r}}
=\sum_{\gamma}{\mathcal P }_{X\setminus\underline r}^{F_{\underline r}}(\gamma)e^{\gamma}$$ such that
\begin{enumerate}\item
$\prod_{a\in X\setminus \underline r}(1-e^a) \Theta_{X\setminus\underline r}^{F_{\underline r}}=1,$  equivalently
$\nabla_{X\setminus \underline r}{\mathcal P
}_{X\setminus\underline r}^{F_{\underline r}}=\delta_0$.\item $\mathcal P_{X\setminus\underline r}^{F_{\underline r}}$ is supported in
$-\sum_{b\in B}b+ C(F_{\underline r},{X\setminus\underline r}).$
\end{enumerate}
\end{lemma}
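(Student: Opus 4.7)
The plan is to exhibit the series explicitly as a convolution of geometric expansions, and then to prove uniqueness by an extremal-point argument in the pointed cone $C(F_{\underline r}, X\setminus\underline r)$.

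Fix $u\in F_{\underline r}$ and partition $X\setminus\underline r = A\sqcup B$ by the sign of $\langle u,\cdot\rangle$. Every element of $A\cup(-B)$ pairs strictly positively with $u$, so $u$ lies in the interior of the dual of $C(A,-B) = C(F_{\underline r}, X\setminus\underline r)$; in particular this cone is pointed.

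For existence, set
$$\Theta_{X\setminus\underline r}^{F_{\underline r}} := \prod_{a\in A}\Bigl(\sum_{k\geq 0}e^{ka}\Bigr)\cdot \prod_{b\in B}\Bigl(-\sum_{k\geq 1}e^{-kb}\Bigr).$$
Each factor is supported in a ray on which $\langle u,\cdot\rangle \geq 0$ (after a shift by $-b$ for the $B$-factors), so the convolution is locally finite and well defined, and by construction its support lies in $-s + C(A,-B)$ with $s = \sum_{b\in B}b$, giving (2). The single-factor identities $(1-e^a)\sum_{k\geq 0}e^{ka} = 1$ and $(1-e^b)\bigl(-\sum_{k\geq 1}e^{-kb}\bigr) = 1$ multiply out to $\prod_{a\in X\setminus\underline r}(1-e^a)\cdot\Theta_{X\setminus\underline r}^{F_{\underline r}} = 1$, which is (1).

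For uniqueness, let $\Theta$ be the difference of two candidates, so $\nabla_{X\setminus\underline r}\Theta = 0$ and $\Theta$ is supported in $-s+C(A,-B)$. Using $\nabla_b = -\tau_b\nabla_{-b}$, one rewrites $\nabla_{X\setminus\underline r} = (-1)^{|B|}\tau_s\, \nabla_{A\cup(-B)}$, so the translate $\hat\Theta := \tau_s\Theta$ is supported in the pointed cone $C(A,-B)$ and satisfies $\nabla_{A\cup(-B)}\hat\Theta = 0$. Suppose $\hat\Theta\neq 0$ and choose $\gamma_0$ in $\operatorname{supp}(\hat\Theta)$ minimizing $\langle u,\cdot\rangle$; the minimum is attained because the intersection of $C(A,-B)$ with $\{\langle u,\cdot\rangle \leq c\}$ contains only finitely many lattice points. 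Expanding
$$\bigl(\nabla_{A\cup(-B)}\hat\Theta\bigr)(\gamma_0) = \sum_{T\subseteq A\cup(-B)} (-1)^{|T|}\hat\Theta\Bigl(\gamma_0 - \textstyle\sum_{c\in T}c\Bigr) = 0,$$
each $T\neq\emptyset$ contributes an argument with strictly smaller $\langle u,\cdot\rangle$ than $\gamma_0$, hence outside $\operatorname{supp}(\hat\Theta)$. Only $T=\emptyset$ survives, so $\hat\Theta(\gamma_0)=0$, contradicting the choice of $\gamma_0$.

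The main technical point is the combination of two consequences of the pointedness of $C(A,-B)$: it ensures that the defining infinite product converges in the sense of locally finite convolution, and it guarantees the existence of a $u$-minimizing lattice point in any nonempty support contained in it. The sign rearrangement $\nabla_b = -\tau_b\nabla_{-b}$ is the bookkeeping device that moves the problem into this cleanly pointed setting where both arguments apply simultaneously.
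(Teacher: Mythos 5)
Your proposal is correct and takes essentially the same approach as the paper: you write down the same explicit series $(-1)^{|B|}e^{-\sum_{b\in B}b}\prod_{a\in A}\sum_{k\geq 0}e^{ka}\prod_{b\in B}\sum_{k\geq 0}e^{-kb}$ (your form $\prod_{a\in A}\sum_{k\geq 0}e^{ka}\cdot\prod_{b\in B}(-\sum_{k\geq 1}e^{-kb})$ is the same after factoring out $-e^{-b}$ from each $B$-factor) and verify properties (1) and (2). The only difference is that the paper dismisses uniqueness as ``easily seen,'' whereas you supply the standard $u$-minimizing extremal-point argument, which is the natural way to fill that gap.
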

\begin{proof}  Set:
\begin{equation}\label{esplicit}
\Theta_{X\setminus\underline r}^{F_{\underline r}}=(-1)^{|B|}e^{-\sum_{b\in B}b}\prod_{a\in
A}(\sum_{k=0}^\infty e^{ka}) \prod_{b\in B}(\sum_{k=0}^\infty
e^{-kb}) .\end{equation} It  is easily seen that this element
satisfies the two properties and is unique.
\end{proof}

In particular, if $\underline r=V$,  we have $F_V=\{0\}$ and $\mathcal P^{\{0\}}_X=\delta_0$.

Morally, $\Theta_{X\setminus\underline r}^{F_{\underline r}}=\prod_{a\in X\setminus \underline
r}\frac{1}{1-e^a}=\prod_{a\in A}\frac{1}{1-e^a}\prod_{b\in
B}\frac{-e^{-b}}{1-e^{-b}}.$ We indeed need to reverse the sign of
some of the vectors in $X\setminus \underline r$ in order that the
convolution product of the corresponding geometric series makes
sense.

\bigskip

Although a function $f\in {\mathcal C}[\Gamma, \underline r]$  may have
infinite support,
 we easily see that the convolution  ${\mathcal P }_{X\setminus\underline r}^{F_{\underline r}}*f$
 is well defined. In fact we claim that, given any $\gamma\in \Gamma$, we can
write $\gamma=\lambda+\mu$ with $\mu\in \underline r\cap \Gamma, $
and $\lambda\in (-\sum_{b\in B}b+C(A,-B))\cap \Gamma$ only in
finitely many ways.  To see this, take $u\in F_{\underline r}$. Then  $\langle u\,|\,
\gamma\rangle=\langle u\,|\, \lambda\rangle$ and $\lambda
=\sum_{a\in A} k_a a+\sum_{b\in B} h_b(-b)$ with $k_a\geq 0,\
h_b\geq 1$. Thus the equality  $\langle u\,|\,
\gamma\rangle=\sum_{a\in A} k_a \langle u\,|\, a\rangle+\sum_{b\in
B} h_b\langle u\,|\, -b\rangle$  yields that the vector $\lambda$
is in a bounded set,  intersecting the lattice $\Gamma$ in a
finite set.

\bigskip

Choose two rational spaces $\underline r,  \underline t$ and     an regular  face $F_{\underline r}$ for $X\setminus \underline r$ in   $\underline r^\perp$.

The image  of  $F_{\underline r}$ modulo  $\underline t^\perp$ is a regular  face  for $(X\cap \underline t)
\setminus \underline r$. To simplify notations, we still denote this face by
$F_{\underline r}$. We have:

\begin{proposition}\label{promother}
\begin{enumerate}
\item $\nabla_{(X\setminus \underline t)\setminus \underline r }
{\mathcal P }_{X\setminus\underline r}^{F_{\underline r}}={\mathcal P }_{(X\setminus\underline r)\cap \underline t}^{F_{\underline r}}$.

\item For $g\in {\mathcal C}[\Gamma\cap \underline r]$:
\begin{equation}\label{eqmother}
\nabla_{X\setminus \underline t} ({\mathcal P }_{X\setminus\underline r}^{F_{\underline r}}*g) ={\mathcal
P }_{(X\setminus\underline r)\cap \underline t}^{F_{\underline r}}* (\nabla_{(X\cap \underline
r)\setminus (\underline t \cap \underline r)}g).
\end{equation}
\end{enumerate}
\end{proposition}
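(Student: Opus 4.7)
The plan is to establish (1) by direct manipulation of the explicit product formula (\ref{esplicit}), and then to derive (2) from (1) together with the fact that difference operators distribute across convolutions.

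For part (1), I would split $X\setminus\underline r = A\sqcup B$ according to the sign of a regular vector $u\in F_{\underline r}$, so that $(X\setminus\underline r)\setminus\underline t = (A\setminus\underline t)\sqcup(B\setminus\underline t)$. Since $\nabla_c$ corresponds on the series side to multiplication by $1-e^c$, the operator $\nabla_{(X\setminus\underline r)\setminus\underline t}$ acts on $\Theta_{X\setminus\underline r}^{F_{\underline r}}$ as multiplication by $\prod_{a\in A\setminus\underline t}(1-e^a)\prod_{b\in B\setminus\underline t}(1-e^b)$. Each factor $(1-e^a)$ with $a\in A\setminus\underline t$ collapses $\sum_{k\ge 0}e^{ka}$ to $1$. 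For $b\in B\setminus\underline t$ a direct check yields $(1-e^b)\bigl(-e^{-b}\sum_{k\ge 0}e^{-kb}\bigr)=1$. What remains is then precisely the explicit formula (\ref{esplicit}) applied to the sublist $(X\setminus\underline r)\cap\underline t$, provided the sign-splitting induced on this sublist by the image of $F_{\underline r}$ modulo $\underline t^\perp$ is exactly $(A\cap\underline t,\,B\cap\underline t)$; this is immediate since for $c\in\underline t$ the pairing $\langle u,c\rangle$ only depends on the class of $u$. By the uniqueness statement of Lemma \ref{lafunteta} this identifies the result with $\mathcal P_{(X\setminus\underline r)\cap\underline t}^{F_{\underline r}}$.

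For part (2), I would decompose
$$X\setminus\underline t \;=\; \bigl((X\setminus\underline r)\setminus\underline t\bigr)\;\sqcup\;\bigl((X\cap\underline r)\setminus(\underline t\cap\underline r)\bigr),$$
using that $(X\cap\underline r)\setminus\underline t=(X\cap\underline r)\setminus(\underline t\cap\underline r)$ because $X\cap\underline r\subset\underline r$. The corresponding factorization of $\nabla_{X\setminus\underline t}$, combined with the remark that each $\nabla_a$ may be pushed onto either factor of a convolution, lets me apply $\nabla_{(X\setminus\underline r)\setminus\underline t}$ to $\mathcal P_{X\setminus\underline r}^{F_{\underline r}}$ (yielding $\mathcal P_{(X\setminus\underline r)\cap\underline t}^{F_{\underline r}}$ by (1)) and $\nabla_{(X\cap\underline r)\setminus(\underline t\cap\underline r)}$ to $g$, which is exactly (\ref{eqmother}).

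The main obstacle is the careful bookkeeping of signs and exponential prefactors in part (1), and checking that the splitting $(A,B)$ of $X\setminus\underline r$ induced by $F_{\underline r}$ restricts compatibly to the sublist lying in $\underline t$. Once (1) is in place, (2) is essentially formal.
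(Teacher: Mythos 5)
Your proposal is correct and follows essentially the same route as the paper: part (1) by applying $\nabla_{(X\setminus\underline t)\setminus\underline r}$ directly to the explicit series of Lemma \ref{lafunteta} so that the factors indexed by $a\in A\setminus\underline t$ and $b\in B\setminus\underline t$ collapse to $1$, and part (2) by factoring $\nabla_{X\setminus\underline t}=\nabla_{(X\cap\underline r)\setminus(\underline t\cap\underline r)}\nabla_{(X\setminus\underline t)\setminus\underline r}$ and distributing the two pieces onto the two factors of the convolution. The paper is simply more terse in (1), but the computation and the bookkeeping of signs and prefactors match your sketch exactly.
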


\begin{proof}
{\it i)} From Equation (\ref{esplicit}),
  we see that  the series associated to the function
  $\nabla_{(X\setminus \underline t)\setminus \underline r }
  {\mathcal P }_{X\setminus\underline r}^{F_{\underline r}}$
  equals
$$
\Theta_{(X\setminus\underline r)  \cap \underline t}^{F_{\underline r}}=(-1)^{|B\cap \underline
t|}e^{-\sum_{b\in B\cap \underline t}b}\prod_{a\in A\cap
\underline t}(\sum_{k=0}^\infty e^{ka}) \prod_{b\in B\cap
\underline t}(\sum_{k=0}^\infty e^{-kb}) .$$

{\it ii)} Let $g\in {\mathcal C}[\Gamma\cap \underline r]$.  Take any
rational subspace $\underline t$, we have that
 $\nabla_{X\setminus \underline t} =
 \nabla_{(X\cap \underline r)\setminus (\underline t \cap \underline r)}
 \nabla_{(X\setminus \underline t)\setminus \underline r }$, thus
$$\nabla_{X\setminus \underline t}( {\mathcal P }_{X\setminus\underline r}^{F_{\underline r}}*g) =
( \nabla_{(X\setminus \underline t)\setminus \underline r
}{\mathcal P }_{X\setminus\underline r}^{F_{\underline r}})* (\nabla_{(X\cap \underline r)\setminus
(\underline t \cap \underline r)}g).$$

 As
$\nabla_{(X\setminus \underline t)\setminus \underline r }
{\mathcal P }_{X\setminus\underline r}^{F_{\underline r}}={\mathcal P }_{(X\setminus\underline r)\cap  \underline t}^{F_{\underline r}}$ from part
{\it i)}, we obtain Formula (\ref{eqmother}), which is the mother
of all other formulae of this article.

\end{proof}

In particular, for $\underline r=\underline t$,   Formula (\ref{eqmother}) implies
the following.
\begin{proposition}\label{proar0}
 If $f\in {\mathcal C}[\Gamma\cap  \underline r]$,
  we have $f=\nabla_{X\setminus \underline r}({\mathcal P }_{X\setminus\underline r}^{F_{\underline r}}*f)$.
  \end{proposition}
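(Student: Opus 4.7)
The plan is to derive this as an immediate specialization of Proposition \ref{promother}(ii) by taking $\underline{t}=\underline{r}$ and $g=f$. With that choice, both sides of Equation (\ref{eqmother}) should collapse to something trivial on the right.

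First I would check that the hypothesis of Proposition \ref{promother}(ii) is met: we need $f \in {\mathcal C}[\Gamma \cap \underline{r}]$, which is exactly the assumption here, and we need the convolution ${\mathcal P}_{X\setminus\underline r}^{F_{\underline r}} * f$ to make sense, which was verified in the discussion following Lemma \ref{lafunteta} (one uses a vector $u \in F_{\underline r}$ to show that the pairs $(\lambda,\mu)$ with $\lambda + \mu = \gamma$, $\mu \in \underline{r} \cap \Gamma$, $\lambda$ in the support of ${\mathcal P}_{X\setminus\underline r}^{F_{\underline r}}$, form a finite set).

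Next I would substitute $\underline{t} = \underline{r}$ into Formula (\ref{eqmother}). On the right-hand side, the list $(X \setminus \underline{r}) \cap \underline{r}$ is empty (elements of $X$ outside $\underline{r}$ cannot also lie in $\underline{r}$), and similarly $(X \cap \underline{r}) \setminus (\underline{r} \cap \underline{r}) = (X \cap \underline{r}) \setminus \underline{r}$ is empty. By the convention noted right after Lemma \ref{lafunteta} (the case $\underline{r} = V$), the function ${\mathcal P}_\emptyset^{F_{\underline r}}$ is $\delta_0$, and $\nabla_\emptyset$ is the identity operator. Hence the right-hand side reduces to $\delta_0 * f = f$, and we conclude $f = \nabla_{X \setminus \underline{r}}({\mathcal P}_{X\setminus\underline r}^{F_{\underline r}} * f)$.

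There is no real obstacle here: the only point that requires minor care is the interpretation of the ``empty'' special function and the empty difference operator, which are fixed by the boundary case recorded after Lemma \ref{lafunteta}. Everything else is a direct substitution into the ``mother formula'' (\ref{eqmother}).
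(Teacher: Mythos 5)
Your proposal is correct and follows exactly the paper's route: the paper derives Proposition \ref{proar0} by setting $\underline t=\underline r$ in Formula (\ref{eqmother}), just as you do. You have simply spelled out the details the paper leaves implicit (both lists on the right-hand side become empty, and the empty special function and empty difference operator reduce to $\delta_0$ and the identity, respectively).
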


 \section{A remarkable space}

 \subsection{The space $\mathcal F(X)$.}
We let $S_X$ denote the set of all rational subspaces
relative to $X$.
\begin{definition}\label{spin}
We   define the space of interest for this article  by:
\begin{equation}
\mathcal F(X):=\{f\in {\mathcal C}[\Gamma]\,|\, \nabla_{X\setminus\underline
r}f\in {\mathcal C}[\Gamma, \underline r], \text{ for all } \underline r\in
S_X\}.
\end{equation}
\end{definition}
One of the equations (corresponding to   $\underline r=\{0\}$) that must satisfy $\Theta(f)$ when $f\in \mathcal F(X)$ is the relation $\prod_{a\in X}(1-e^a)\Theta(f)=c$, where $c$ is a constant. This
equation was the motivation for introducing the space  $\mathcal F(X)$.
Indeed the first  important fact on  this space is the following:
\begin{lemma}\label{lacomin}\begin{enumerate}
\item If $F$ is a  regular face for   $X$,  then
$\mathcal P_X^F $ lies in $\mathcal F(X)$. \item The space $DM(X)$
is contained in   $\mathcal F(X)$.
\end{enumerate}\end{lemma}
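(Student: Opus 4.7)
My plan is to derive (1) directly from Proposition \ref{promother}(i) by specializing $\underline r=\{0\}$, and to read off (2) from the Section 2.1 reformulation of $DM(X)$.

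For (1), I would first match notation. Taking $\underline r=\{0\}$ in the framework of Section 2.2 gives $\underline r^\perp$ equal to the full ambient space and $X\setminus\underline r=X$, since no element of $X$ is zero. In this case a ``regular face $F_{\underline r}$ for $X\setminus\underline r$ in $\underline r^\perp$'' is precisely a regular face $F$ for $X$ as in the statement, and $\mathcal P_{X\setminus\underline r}^{F_{\underline r}}=\mathcal P_X^F$. I would then pick an arbitrary $\underline t\in S_X$ and invoke Proposition \ref{promother}(i) with this $\underline r=\{0\}$ and this $\underline t$, which yields
\[
\nabla_{X\setminus\underline t}\,\mathcal P_X^F \;=\; \mathcal P_{X\cap\underline t}^F.
\]
Finally, I would quote the support clause of Lemma \ref{lafunteta}(2) for the right-hand side: it is supported in $-\sum_{b\in B\cap\underline t}b+C(F,X\cap\underline t)$, which sits inside the linear span of $X\cap\underline t$, i.e.\ inside $\underline t$. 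Therefore $\nabla_{X\setminus\underline t}\mathcal P_X^F\in\mathcal C[\Gamma,\underline t]$ for every $\underline t\in S_X$, which is exactly the defining condition of $\mathcal F(X)$ in Definition \ref{spin}.

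For (2), the Section 2.1 reformulation says $f\in DM(X)$ iff $\nabla_{X\setminus\underline r}f=0$ for every proper rational subspace $\underline r$. The zero function trivially lies in $\mathcal C[\Gamma,\underline r]$, so the condition of Definition \ref{spin} is satisfied for every proper $\underline r$; in the remaining case $\underline r=V$ we have $X\setminus V=\emptyset$, so $\nabla_\emptyset f=f\in\mathcal C[\Gamma]=\mathcal C[\Gamma,V]$, which is automatic. Hence every $f\in DM(X)$ lies in $\mathcal F(X)$.

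Neither part presents a real difficulty: Proposition \ref{promother}(i) was written precisely so that (1) falls out by specialization, and $\mathcal F(X)$ is defined precisely to enlarge $DM(X)$ by relaxing the vanishing condition $\nabla_{X\setminus\underline r}f=0$ to a support condition on $\underline r$. The only minor bookkeeping is the identification of a ``regular face $F$ for $X$'' with the general notion of a ``regular face $F_{\underline r}$ for $X\setminus\underline r$ in $\underline r^\perp$'' in the case $\underline r=\{0\}$, which is immediate.
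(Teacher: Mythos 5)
Your proof is correct and follows essentially the same route as the paper's: the paper's one-line proof of (1) cites the identity $\nabla_{X\setminus\underline r}\mathcal P_X^F=\mathcal P_{X\cap\underline r}^F\in\mathcal C[\Gamma,\underline r]$, which is exactly the $\underline r=\{0\}$ specialization of Proposition \ref{promother}(i) that you invoke, and treats the support claim and part (2) as immediate from the definitions. You have simply spelled out the bookkeeping (the identification of faces when $\underline r=\{0\}$, the support of $\mathcal P_{X\cap\underline t}^F$ via Lemma \ref{lafunteta}(2), and the trivial $\underline r=V$ case) that the paper leaves implicit.
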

\begin{proof}
{\it i)} Indeed,   $\nabla_{X\setminus\underline r} {\mathcal P
}_X^F =\mathcal P_{X\cap\underline r}^F\in {\mathcal C}[\Gamma,\underline r]$.

{\it ii)}  Is clear from the definitions.
\end{proof}

In particular,  if $X$ generates a pointed cone, then the partition function
$\mathcal P _X $ lies in $\mathcal F(X)$.

\begin{example} Let us give a  simple example. Let $\Gamma=\mathbb
Z\omega$ and $X=[2\omega,-\omega]$. Then it is easy to see that
$\mathcal F(X)$ is a free $\mathbb Z$ module of dimension $4$, with corresponding basis
$$\theta_1=\sum_{n\in \mathbb Z}e^{n\omega},\hspace{0.5cm}
\theta_2=\sum_{n\in \mathbb Z}ne^{n\omega},$$ $$
\theta_3=\sum_{n\in \mathbb Z} (\frac{n}{2}+\frac{1-(-1)^n}{4})e^{n\omega},\hspace{0.5cm}
\theta_4=\sum_{n\geq 0}
(\frac{n}{2}+\frac{1-(-1)^n}{4})e^{n\omega}.$$

Here $\theta_1,\theta_2,\theta_3$ are a $\mathbb Z$ basis of $DM(X)$.

\end{example}

In fact, there is a much more precise statement  of which Lemma
\ref{lacomin} is  a very special case and which will be the object
of Theorem \ref{gesta}.

\subsection{Some properties of $\mathcal F(X)$.}

Let $\underline r$ be a rational subspace and $F_{ \underline
r }$ be a regular face for $X\setminus \underline r$.

\begin{proposition}\label{proar}

\begin{enumerate}
 \item  The map $g\mapsto
{\mathcal P }_{X\setminus\underline r}^{F_{\underline r}}*g$  gives an injection from $\mathcal F(X\cap
\underline r)$ to $\mathcal F(X)$. Moreover
\begin{equation}\label{rinve}
\nabla_{X\setminus \underline r} ({\mathcal P }_{X\setminus\underline r}^{F_{\underline r}}*g)=g,\ \forall g\in  \mathcal F(X\cap
\underline r).
\end{equation}  \item $\nabla_{X\setminus \underline r}$ maps $\mathcal F(X)$ surjectively to
$\mathcal F(X\cap \underline r)$.\item If $g\in DM(X\cap
\underline r)$, then $\nabla_{X\setminus \underline t} ({\mathcal
P }_{X\setminus\underline r}^{F_{\underline r}}*g)=0$ for any rational subspace $\underline t$ such that
$\underline t\cap \underline r\neq \underline r$.

\end{enumerate}
\end{proposition}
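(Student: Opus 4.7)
The plan is to deduce all three parts from the ``mother formula'' (\ref{eqmother}) of Proposition \ref{promother}, which the authors have already signaled as the main tool. A small technical point to keep in mind throughout is that the subspace $\underline t \cap \underline r$ need not be rational relative to $X \cap \underline r$; but if we let $\underline s$ denote the span of $X \cap \underline r \cap \underline t$, then $\underline s$ \emph{is} rational relative to $X \cap \underline r$, lies inside $\underline t \cap \underline r$, and satisfies $(X \cap \underline r) \cap \underline s = (X \cap \underline r) \cap \underline t = (X \cap \underline r) \cap (\underline t \cap \underline r)$, so that $\nabla_{(X \cap \underline r) \setminus (\underline t \cap \underline r)} = \nabla_{(X \cap \underline r) \setminus \underline s}$. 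This harmless substitution is what links the difference operators appearing in (\ref{eqmother}) to the defining equations of $\mathcal F(X \cap \underline r)$ and $DM(X \cap \underline r)$.

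For part (i), take $g \in \mathcal F(X \cap \underline r)$ and an arbitrary rational subspace $\underline t$ of $V$; apply (\ref{eqmother}) to obtain
$$\nabla_{X\setminus \underline t}(\mathcal P_{X\setminus\underline r}^{F_{\underline r}} * g) = \mathcal P_{(X\setminus \underline r)\cap \underline t}^{F_{\underline r}} * \bigl(\nabla_{(X\cap \underline r)\setminus (\underline t \cap \underline r)} g\bigr).$$
The first factor is supported in $\underline t$ by Lemma \ref{lafunteta}. For the second, rewrite $\nabla_{(X\cap \underline r)\setminus (\underline t \cap \underline r)}$ as $\nabla_{(X\cap \underline r)\setminus \underline s}$; since $g \in \mathcal F(X \cap \underline r)$ and $\underline s$ is a rational subspace of $\underline r$ relative to $X \cap \underline r$, this difference is supported in $\underline s \subseteq \underline t$. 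The convolution therefore lies in $\mathcal C[\Gamma, \underline t]$, so $\mathcal P_{X\setminus\underline r}^{F_{\underline r}} * g \in \mathcal F(X)$. The identity (\ref{rinve}) and the resulting injectivity are a direct quotation of Proposition \ref{proar0}.

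For part (ii), the factorization $\nabla_{(X\cap \underline r)\setminus \underline s}\,\nabla_{X\setminus \underline r} = \nabla_{X\setminus \underline s}$ (valid because the two index multisets are disjoint with union $X\setminus \underline s$, for any $\underline s \subseteq \underline r$) shows at once that $\nabla_{X\setminus \underline r}$ carries $\mathcal F(X)$ into $\mathcal F(X \cap \underline r)$: any proper rational subspace $\underline s$ of $\underline r$ (relative to $X \cap \underline r$) is automatically a rational subspace of $V$, and $f \in \mathcal F(X)$ gives $\nabla_{X\setminus \underline s}f \in \mathcal C[\Gamma,\underline s]$. Surjectivity is immediate from part (i), since $h \mapsto \mathcal P_{X\setminus \underline r}^{F_{\underline r}} * h$ provides a right inverse.

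Part (iii) is again an application of (\ref{eqmother}): with $g \in DM(X \cap \underline r)$ and $\underline t$ such that $\underline t \cap \underline r \neq \underline r$, the subspace $\underline s = \text{span}(X \cap \underline r \cap \underline t) \subseteq \underline t \cap \underline r$ is a \emph{proper} rational subspace of $\underline r$ relative to $X \cap \underline r$. By the defining property of $DM(X \cap \underline r)$, $\nabla_{(X\cap \underline r)\setminus \underline s}g = 0$, hence the right-hand side of (\ref{eqmother}) vanishes. The only step that really requires a moment's thought is this replacement of $\underline t \cap \underline r$ by the $X$-rational $\underline s$; once it is in hand, each of the three statements reduces to a single line from (\ref{eqmother}) or from the already-established Proposition \ref{proar0}.
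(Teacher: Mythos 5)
Your proof is correct and follows essentially the same route as the paper, deducing all three parts from the mother formula (\ref{eqmother}) together with Proposition \ref{proar0}. The one thing you add is an explicit treatment of a technicality that the paper's terse proof glosses over: since $\underline t \cap \underline r$ need not be rational relative to $X \cap \underline r$, you replace it by $\underline s = \operatorname{span}(X \cap \underline r \cap \underline t)$, observe that $\nabla_{(X\cap\underline r)\setminus(\underline t\cap\underline r)} = \nabla_{(X\cap\underline r)\setminus\underline s}$, and then invoke the defining properties of $\mathcal F(X\cap\underline r)$ and $DM(X\cap\underline r)$ for the genuinely $X\cap\underline r$-rational subspace $\underline s$ --- a worthwhile clarification, but not a different argument.
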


\begin{proof}
{\it i)} If $g\in\mathcal F(X\cap \underline r)$, then
$\nabla_{(X\cap \underline r)\setminus (\underline t \cap
\underline r)}g\in {\mathcal C}[\Gamma\cap \underline t \cap \underline r]$,
hence Formula (\ref{eqmother}) in Proposition \ref{promother}
shows that $\nabla_{X\setminus \underline t} ({\mathcal P }_{X\setminus\underline r}^{F_{\underline r}}*g)
\in {\mathcal C}[\Gamma,\underline t]$, so that ${\mathcal P }_{X\setminus\underline r}^{F_{\underline r}}*g\in
\mathcal F(X)$
  as desired.  Formula (\ref{rinve})  follows from the fact that $\nabla_{X\setminus \underline r}  {\mathcal P }_{X\setminus\underline r}^{F_{\underline r}}=\delta_0$.
\smallskip

{\it ii)}  If $f\in \mathcal F(X)$, we have $\nabla_{X\setminus
\underline r}f\in \mathcal F(X\cap \underline r)$. In fact  take a
rational subspace $\underline t$ of $\underline r$, we have that
$\nabla_{(X\cap \underline r)\setminus \underline
t}\nabla_{X\setminus \underline r}f=\nabla_{X\setminus \underline
t}f\in {\mathcal C}[\Gamma\cap \underline t]$.  The fact that $\nabla_{X\setminus
\underline r}$ is surjective is a consequence of  Formula (\ref{rinve}).
\smallskip

{\it iii)} Similarly, if $g\in  DM(X\cap \underline r)$,
Formula (\ref{eqmother}) in Proposition \ref{promother} implies
the third assertion of our proposition.

\end{proof}

Proposition \ref{proar} allows us to  associate,  to a rational
space ${\underline r}$ and a regular face $F_{\underline r}$ for $X\setminus \underline r$, the operator
$${\Pi}_{X\setminus\underline r}^{\underline r, F_{\underline r}}:f\mapsto {\mathcal P}_{X\setminus\underline r}^{F_{\underline r}}*
(\nabla_{X\setminus {\underline r} }f)$$ on  $\mathcal F(X)$. From  Formula (\ref{rinve}),   it follows that the
operator ${\Pi}_X^{\underline r, F_{\underline r}}$, on  $\mathcal F(X)$, is a projector with image ${\mathcal P}_{X\setminus\underline r}^{F_{\underline r}}*\mathcal F(X\cap
\underline r).$

\subsection{The main theorem}
Choose,  for every rational space $\underline r$, a   regular face
$F_{\underline r}$ for $X\setminus    {\underline r}$. The following theorem is the main
theorem of this section.
\begin{theorem}\label{gesta}
With the previous choices,  we have:
\begin{equation}\label{decomp}
\mathcal F(X)=\oplus_{\underline r\in S_X} {\mathcal
P}_{X\setminus\underline r}^{F_{\underline r}}*DM(X\cap \underline r ).
\end{equation}
\end{theorem}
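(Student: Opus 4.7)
The plan is to establish the three needed facts in sequence: inclusion of each summand in $\mathcal F(X)$, directness of the sum, and surjectivity. Inclusion is immediate from Proposition \ref{proar}(i) and Lemma \ref{lacomin}(ii), since $DM(X \cap \underline r) \subset \mathcal F(X \cap \underline r)$. For directness, I would take a vanishing finite sum $\sum_{\underline r} {\mathcal P}^{F_{\underline r}}_{X \setminus \underline r} \ast g_{\underline r} = 0$ with $g_{\underline r} \in DM(X \cap \underline r)$ not all zero, and pick $\underline s$ minimal (by inclusion) among indices with $g_{\underline s} \neq 0$. Applying $\nabla_{X \setminus \underline s}$ kills every term with $\underline r \not\subset \underline s$ by Proposition \ref{proar}(iii), the minimality of $\underline s$ rules out $\underline r \subsetneq \underline s$, and Formula (\ref{rinve}) turns the surviving $\underline r = \underline s$ term into $g_{\underline s}$; thus $g_{\underline s} = 0$, a contradiction.

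For surjectivity, given $f \in \mathcal F(X)$, I would construct the components by upward induction on $\dim \underline r$. The base is $h_{\{0\}} := \nabla_X f \in {\mathcal C}[\Gamma, \{0\}] = \mathbb Z\, \delta_0 = DM(\emptyset)$. Inductively, once $g_{\underline s} := {\mathcal P}^{F_{\underline s}}_{X \setminus \underline s} \ast h_{\underline s}$ has been built and $h_{\underline s} \in DM(X \cap \underline s)$ established for all $\underline s \subsetneq \underline r$, I would set
$$h_{\underline r} := \nabla_{X \setminus \underline r} f - \sum_{\underline s \subsetneq \underline r} \nabla_{X \setminus \underline r}(g_{\underline s}), \qquad g_{\underline r} := {\mathcal P}^{F_{\underline r}}_{X \setminus \underline r} \ast h_{\underline r}.$$
Granting the inductive claim $h_{\underline r} \in DM(X \cap \underline r)$, the identity $f = \sum_{\underline r} g_{\underline r}$ follows by applying $\nabla_{X \setminus \underline t}$ for each rational $\underline t$: Proposition \ref{proar}(iii) discards the terms with $\underline r \not\subset \underline t$, Formula (\ref{rinve}) evaluates the $\underline r = \underline t$ term as $h_{\underline t}$, and the very formula defining $h_{\underline t}$ then makes the whole expression zero; specializing to $\underline t = V$, where $\nabla_\emptyset = \mathrm{id}$, yields $f - \sum_{\underline r} g_{\underline r} = 0$.

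The main obstacle is the inductive claim $h_{\underline r} \in DM(X \cap \underline r)$. For proper rational $\underline t \subsetneq \underline r$, the identity $\nabla_{(X \cap \underline r) \setminus \underline t} \nabla_{X \setminus \underline r} = \nabla_{X \setminus \underline t}$ and Formula (\ref{eqmother}) yield
$$\nabla_{(X \cap \underline r) \setminus \underline t}\, h_{\underline r} = \nabla_{X \setminus \underline t} f - \sum_{\underline s \subsetneq \underline r} \nabla_{X \setminus \underline t}(g_{\underline s}).$$
Proposition \ref{proar}(iii) restricts the sum to those $\underline s \subset \underline t$; since $\underline t \subsetneq \underline r$, these coincide with all $\underline s \subset \underline t$, and the inductive defining formula for $h_{\underline t}$ shows the sum equals $\nabla_{X \setminus \underline t} f$, so the right-hand side vanishes. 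A preliminary check is also needed to ensure $h_{\underline r}$ is actually supported on $\Gamma \cap \underline r$, which follows because $\nabla_{X \setminus \underline r} f \in {\mathcal C}[\Gamma, \underline r]$ by definition of $\mathcal F(X)$ and each $\nabla_{X \setminus \underline r}(g_{\underline s})$ is a convolution supported in $\underline r$ via Formula (\ref{eqmother}). The delicate point is the bookkeeping in tracking $\nabla_{X \setminus \underline t}(g_{\underline s})$ through Formula (\ref{eqmother}) and verifying that the chosen regular faces $F_{\underline s}$ behave consistently under the required restrictions.
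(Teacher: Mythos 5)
Your proof is correct and rests on the same two ingredients as the paper's: the ``mother formula'' (\ref{eqmother}) together with its consequences in Proposition \ref{proar} (in particular the orthogonality in item iii) and the left-inverse property (\ref{rinve})), and a bottom-up induction on the dimension of the rational subspace. The difference is purely organizational. The paper introduces the filtration $\mathcal F(X)_i=\bigcap_{\underline t\in S_X^{(i-1)}}\ker\nabla_{X\setminus\underline t}$ and proves the split short exact sequences of Theorem \ref{lemexact}, peeling off the dimension-$i$ components one layer at a time; the theorem is then the concatenation of these splittings. You instead unroll the same induction into an explicit recursive construction, $h_{\underline r}=\nabla_{X\setminus\underline r}f-\sum_{\underline s\subsetneq\underline r}\nabla_{X\setminus\underline r}(g_{\underline s})$, $g_{\underline r}=\mathcal P^{F_{\underline r}}_{X\setminus\underline r}*h_{\underline r}$, and give a separate directness argument by picking a subspace $\underline s$ minimal under inclusion among those with nonzero component and applying $\nabla_{X\setminus\underline s}$. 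Your version is more hands-on (one actually sees the formula for each component, which is close in spirit to Proposition \ref{gliop}), at the cost of slightly heavier bookkeeping in the inductive verification that $h_{\underline r}\in DM(X\cap\underline r)$; the paper's filtration packaging is cleaner but less explicit. Two small points worth tightening in your write-up: when you invoke Proposition \ref{proar}(iii) to discard terms with $\underline s\not\subset\underline t$ you should note that this is exactly the condition $\underline s\cap\underline t\neq\underline s$ appearing there, and when checking $h_{\underline r}\in DM(X\cap\underline r)$ it suffices (and is what the definition requires) to test against rational \emph{hyperplanes} of $\underline r$, although testing all proper rational $\underline t\subsetneq\underline r$ as you do is of course also valid.
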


\begin{proof}
Denote by  $
S^{(i)}_X$ the subset  of  subspaces  $\underline r\in S_X $ of  dimension $i$.
Consider   $\nabla_{X\setminus\underline r}$ as an operator on
$\mathcal F(X)$ with values in ${\mathcal C}[\Gamma]$.
 Define the spaces
$$\mathcal F(X)_{i}:=\cap_{\underline t\in S^{(i-1)
}_X}\ker(
 \nabla_{X\setminus \underline t}).$$

Notice that   by definition $\mathcal F(X)_{\{0\}}=\mathcal F(X)$, that
$\mathcal F(X)_{\dim V}$ is the space  $DM(X)$ and that $\mathcal F(X)_{i+1}\subseteq
\mathcal F(X)_{i}$.

\begin{lemma}\label{lalari}
Let $\underline r \in {S}_X^{(i)}$.

\quad i)
 The image of $\nabla_{X\setminus\underline r}$ restricted
to $\mathcal F(X)_{i}$ is contained in the  space   $DM(X\cap \underline r)$.

\quad ii) If $f$ is in  $DM(X\cap \underline r)$, then
${\mathcal P }_{X\setminus\underline r}^{F_{\underline r}}*f \in \mathcal F(X)_{i}$.
\end{lemma}
\begin{proof}
{\it i)} First we know, by the definition of $\mathcal F(X)$, that
$\nabla_{X\setminus\underline r} \mathcal F(X)_{i}$ is contained in the space
${\mathcal C}[\Gamma, \underline r]$. Let $\underline t$  be a rational
hyperplane of $\underline r$, so that $\underline t$ is of
dimension $i-1$. By construction, we have that for every $f\in
\mathcal F(X)_{i}$
$$0=\prod_{a\in X\setminus \underline t}\nabla_a f=
\prod_{a\in (X\cap \underline r)\setminus \underline t} \nabla_a
\nabla_{X\setminus\underline r}f.$$ This means that
$\nabla_{X\setminus\underline r} f$ satisfies the difference
equations given by the cocircuits of   $X\cap \underline r$, that is,  it lies in $DM(X\cap \underline r)$.

\smallskip

{\it ii)} Follows from the third item of Proposition \ref{proar}.

\end{proof}

Consider the map $\mu_i: \mathcal F(X)_i \to \oplus_{\underline r\in
S_X^{(i)}} DM(X\cap \underline r)$ given by $$\mu_i
f:=\oplus_{\underline r\in S_X^{(i)}} \nabla_{X\setminus
\underline r}f$$ and the map ${\bf P}_i:\oplus_{\underline r\in
S_X^{(i)}} DM(X\cap \underline r)\to \mathcal F(X)_i$ given by
$${\bf P}_i(\oplus g_{\underline
r}):= \sum \mathcal P_{X\setminus\underline r}^{F_{\underline r}}*g_{\underline r }.$$

\begin{theorem}\label{lemexact}
The sequence $$0\longrightarrow \mathcal F(X)_{i+1} \longrightarrow \mathcal F(X)_i
\stackrel{\mu_i}\longrightarrow\oplus_{\underline r\in
S_X^{(i)}} DM(X\cap \underline r)\longrightarrow 0$$ is
exact. Furthermore, the map ${\bf P}_i$ provides a splitting  of
this exact sequence: $\mu_i {{\bf P}_i}={\rm Id}$.
\end{theorem}
\begin{proof}
By definition, $\mathcal F(X)_{i+1}$ is the kernel of $\mu_i$,  thus we only need to show that
$\mu_i {{\bf P}_i}={\rm Id}$.
Given   $\underline r\in S_X^{(i)}$ and $g\in DM(X\cap \underline r)$,  by Formula (\ref{rinve}) we have  $\nabla_{X\setminus \underline r}({\mathcal P }_{X\setminus\underline r}^{F_{\underline r}}*g)=g.$  If instead we take $
\underline t\neq \underline r$ another subspace  of $S_X^{(i)}$,
$\underline r\cap \underline t$ is a proper subspace of
$\underline t$. Item {\it iii)} of Proposition \ref{proar} says
that for $g\in DM(X\cap \underline r)$, $\nabla_{X\setminus
\underline t} ({\mathcal P }_{X\setminus\underline r}^{F_{\underline r}}*g) =0.$ Thus,  given a family $g_{\underline r}\in DM(X\cap \underline r)$, the
function $f= \sum_{\underline t\in S_X^{(i)}} {\mathcal P
}_{X\setminus\underline t}^{F_{\underline t}}*g_{\underline t}$ is such that
$\nabla_{X\setminus \underline r}f=g_ {\underline r}$ for all $\underline r\in  S_X^{(i)}$. This proves our claim
that  $\mu_i {{\bf P}_i}={\rm Id}$.\end{proof}

Putting together these   facts, Theorem \ref{gesta} follows.

\end{proof}

\bigskip

A collection ${\bf F}=(F_{\underline r})$ of   faces
$F_{\underline r}\subset \underline r^\perp$ regular for $X\setminus \underline r$, indexed by the rational subspaces $\underline r\in S_X$
 will be called a {\it  $X$--regular collection}.

Given a $X$-regular collection ${\bf F}$, we can write an element
$f\in \mathcal F(X)$ as
$$f=\sum_{\underline r\in S_X}f_{\underline
r},\qquad \text{(Theorem \ref{gesta})}$$ with $f_{\underline r}\in {\mathcal P}_{X\setminus\underline r}^{F_{\underline
r}}*DM(X\cap \underline r ).$  This expression for $f$ will be
called  the ${\bf F}$ de\-com\-po\-si\-tion of $f$.
 In this
de\-com\-po\-si\-tion, we always have $F_V=\{0\},\, {\mathcal P}_X^{F_{V}}=\delta_0$  and the component $f_V$ is in $DM(X).$

The space ${\mathcal P}_{X\setminus\underline r}^{F_{\underline r}}*DM(X\cap \underline r
)$
 will be referred to as {\it the $F_{\underline r}$-component} of $\mathcal F(X)$.

\medskip

From Lemma \ref{lemexact},  it follows that  the operator  ${\rm Id}-{{\bf P}_i}\mu_i $  projects $\mathcal F(X)_i$  to $\mathcal F(X)_{i+1}$  with kernel $\oplus_{\underline r\in
S_X^{(i)}} \mathcal P_{X\setminus\underline r}^{F_{\underline r}}*DM(X\cap \underline r)$ (this operator depends of ${\bf F}$).
 Thus  the ordered product
$${\bf \Pi}_i^{\bf F}:= ({\rm Id}-{{\bf P}_{i-1}}\mu_{i-1} )({\rm Id}-{{\bf P}_{i-2}}\mu_{i-2} )\cdots ({\rm Id}-{{\bf P}_0}\mu_0 )$$  projects
$\mathcal F(X)$  to $\mathcal F(X)_{i}$; therefore,  we have

\begin{proposition}\label{gliop}
Let ${\bf F}$  be a $X$-regular collection and $ \underline r $  a rational subspace of dimension $i$.
 The operator
$$P_{\underline r}^{\bf F}= \Pi_X^{\underline r,F_{\underline r}}
({\rm Id}-{{\bf P}_{i-1}}\mu_{i-1} )({\rm Id}-{{\bf P}_{i-2}}\mu_{i-2} )\cdots ({\rm Id}-{{\bf P}_0}\mu_0 )=\Pi_X^{\underline r,u_{\underline r}}{\bf \Pi}_i^{\bf u}$$ is the projector of
$\mathcal F(X)$ to the   $F_{\underline r}$-component  $ {\mathcal
P}_{X\setminus\underline r}^{F_{\underline r}}*DM(X\cap \underline r )$ of $\mathcal
F(X)$.

 In particular, if $\dim(V)=s$,  the operator
$$P_V:=({\rm Id}-{{\bf P}_{s-1}}\mu_{s-1} )({\rm Id}-{{\bf P}_{s-2}}\mu_{s-2} )\cdots ({\rm Id}-{{\bf P}_0}\mu_0 )$$ is the projector
$\mathcal F(X)\to DM(X)$ associated to the direct sum
de\-com\-po\-si\-tion:
$$\mathcal F(X)=DM(X)\oplus \Big(\oplus_{\underline r\in S_X| {\underline r}
\neq V} {\mathcal P}_{X\setminus\underline r}^{F_{\underline r}}*DM(X\cap \underline r
)\Big).$$
\end{proposition}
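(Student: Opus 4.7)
The plan is to iterate Theorem \ref{lemexact}. Since ${\bf P}_j\mu_j$ is a projector of $\mathcal F(X)_j$ onto $\bigoplus_{\underline r\in S_X^{(j)}}\mathcal P_{X\setminus\underline r}^{F_{\underline r}}*DM(X\cap\underline r)$ with kernel $\mathcal F(X)_{j+1}$, each factor $\mathrm{Id}-{\bf P}_j\mu_j$ projects $\mathcal F(X)_j$ onto $\mathcal F(X)_{j+1}$ along that $j$-dimensional block. Telescoping these projectors, I conclude that ${\bf \Pi}_i^{\bf F}$ projects $\mathcal F(X)$ onto $\mathcal F(X)_i$, acts as the identity on $\mathcal F(X)_i$, and annihilates every $F_{\underline s}$-component with $\dim\underline s<i$.

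Next, for $\underline r$ of dimension $i$, Lemma \ref{lalari} i) sends $\mathcal F(X)_i$ into $DM(X\cap\underline r)$ under $\nabla_{X\setminus\underline r}$, so $P_{\underline r}^{\bf F}=\Pi_X^{\underline r,F_{\underline r}}\circ{\bf \Pi}_i^{\bf F}$ automatically has image in the $F_{\underline r}$-component. To finish I would verify, summand-by-summand against the decomposition of Theorem \ref{gesta}, that $P_{\underline r}^{\bf F}$ is the identity on its own component and zero on every other. For the identity: if $f=\mathcal P_{X\setminus\underline r}^{F_{\underline r}}*g$ with $g\in DM(X\cap\underline r)$, then Lemma \ref{lalari} ii) puts $f$ in $\mathcal F(X)_i$, so ${\bf \Pi}_i^{\bf F}f=f$, and Formula (\ref{rinve}) gives $\nabla_{X\setminus\underline r}f=g$, hence $P_{\underline r}^{\bf F}f=f$.

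The vanishing case is the one requiring real attention. Take $f=\mathcal P_{X\setminus\underline s}^{F_{\underline s}}*h$ with $\underline s\neq\underline r$, and split on $j=\dim\underline s$. If $j<i$, then $f\in\mathcal F(X)_j$ by Lemma \ref{lalari} ii), the previous factors of ${\bf \Pi}_i^{\bf F}$ leave $f$ untouched, and at step $j$ Proposition \ref{proar} iii) shows that $\mu_j f$ is concentrated in the $\underline s$-summand with value $h$; hence ${\bf P}_j\mu_jf=f$ and the factor $\mathrm{Id}-{\bf P}_j\mu_j$ kills $f$. If $j\geq i$, then $f\in\mathcal F(X)_j\subseteq\mathcal F(X)_i$ is fixed by ${\bf \Pi}_i^{\bf F}$, but the final $\Pi_X^{\underline r,F_{\underline r}}$ kills it because $\underline s\cap\underline r\subsetneq\underline s$ (here $\underline s\neq\underline r$ together with $\dim\underline s\geq\dim\underline r$ force $\underline s\not\subseteq\underline r$), so Proposition \ref{proar} iii) applies once more. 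The bookkeeping of this dimensional case split—identifying which factor does the killing when $\underline s$ is smaller than, equal-dimensional to, or larger than $\underline r$—is the only real obstacle in the proof.

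The $P_V$ assertion is the special case $\underline r=V$: here $\mathcal F(X)_{\dim V}=DM(X)$ already equals the $V$-component (since $F_V=\{0\}$ and $\mathcal P_X^{F_V}=\delta_0$), and $\Pi_X^{V,\{0\}}$ is the identity, so $P_V={\bf \Pi}_{\dim V}^{\bf F}$ is the projector of $\mathcal F(X)$ onto $DM(X)$ along the sum of all $F_{\underline r}$-components with $\underline r\neq V$.
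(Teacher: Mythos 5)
Your proof is correct and follows essentially the same route as the paper's, which is quite terse: the paper merely observes that each factor ${\rm Id}-{\bf P}_j\mu_j$ projects $\mathcal F(X)_j$ onto $\mathcal F(X)_{j+1}$ with kernel the $j$-dimensional block and then asserts the proposition. Your explicit component-by-component verification against the direct sum of Theorem \ref{gesta} (including the careful dimensional case split, using Proposition \ref{proar}~iii) in both the $j<i$ and $j\geq i$ cases) fills in exactly what the paper leaves implicit.
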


Let ${\bf F}=(F_{\underline r})$  be a $X$--regular collection. If
$\underline t$ is a rational subspace  and, for each $\underline r\in
S_{X\cap \underline t}$, we take the image of $F_{\underline r}$ modulo $\underline t^{\perp}$ we get a    $X\cap \underline t$--regular
collection. We still denote it by $\bf F$ in the next proposition.
The proof of this proposition is skipped, as it is very similar to
preceding   proofs.

\begin{proposition}\label{recurrence}
Let $\underline t$ be a rational subspace.
 Let $f\in \mathcal F(X)$ and $f=\sum_{\underline r\in
S(X)}f_{\underline r}$ be the ${\bf F}$ de\-com\-po\-si\-tion of
$f$ and $\nabla_{X\setminus \underline t}f=\sum_{{\underline t}\in
S_{X\cap \underline t} } g_{\underline r}$ be the ${\bf
F}$ de\-com\-po\-si\-tion of $\nabla_{X\setminus \underline t}f$,
then

\begin{itemize}
\item $\nabla_{X\setminus s}f_{\underline r}=0$ if $\,{\underline
r}\notin S_{X\cap \underline t},$ \item
$\nabla_{X\setminus\underline  s}f_{\underline r}= g_{\underline
r}$ if $\,\underline r\in S_{X\cap \underline t} $.
\end{itemize}
\end{proposition}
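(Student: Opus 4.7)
The plan is to reduce the two claims to Proposition \ref{proar}(iii) and Formula (\ref{eqmother}), then invoke uniqueness in the decomposition of Theorem \ref{gesta} applied to $\mathcal F(X\cap\underline t)$. First, I would unpack the hypothesis: each component $f_{\underline r}$ has the form $f_{\underline r}=\mathcal P_{X\setminus\underline r}^{F_{\underline r}}*h_{\underline r}$ with $h_{\underline r}\in DM(X\cap\underline r)$, and the rational subspaces indexing $\mathcal F(X\cap\underline t)$ are exactly those $\underline r\in S_X$ contained in $\underline t$ (using that $\underline r$ is spanned by $X\cap\underline r\subseteq X\cap\underline t$). So ``$\underline r\notin S_{X\cap\underline t}$'' is equivalent to $\underline r\not\subseteq\underline t$, i.e., $\underline t\cap\underline r\neq\underline r$.

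For the first bullet, the condition $\underline t\cap\underline r\neq\underline r$ is precisely the hypothesis of Proposition \ref{proar}(iii), which gives immediately
\[
\nabla_{X\setminus\underline t}f_{\underline r}=\nabla_{X\setminus\underline t}\bigl(\mathcal P_{X\setminus\underline r}^{F_{\underline r}}*h_{\underline r}\bigr)=0.
\]

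For the second bullet, suppose $\underline r\subseteq\underline t$. Then $\underline t\cap\underline r=\underline r$ and $(X\cap\underline r)\setminus(\underline t\cap\underline r)=\emptyset$, so Formula (\ref{eqmother}) from Proposition \ref{promother} specializes to
\[
\nabla_{X\setminus\underline t}f_{\underline r}=\mathcal P_{(X\setminus\underline r)\cap\underline t}^{F_{\underline r}}*h_{\underline r},
\]
since $\nabla_{\emptyset}h_{\underline r}=h_{\underline r}$. Observe that $(X\setminus\underline r)\cap\underline t=(X\cap\underline t)\setminus\underline r$ and $(X\cap\underline t)\cap\underline r=X\cap\underline r$; since $h_{\underline r}\in DM(X\cap\underline r)=DM((X\cap\underline t)\cap\underline r)$, the element above lies in the $F_{\underline r}$-component of $\mathcal F(X\cap\underline t)$ relative to the induced regular collection.

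Summing over $\underline r$ and using the first bullet to kill the terms with $\underline r\not\subseteq\underline t$, we obtain
\[
\nabla_{X\setminus\underline t}f=\sum_{\underline r\in S_{X\cap\underline t}}\mathcal P_{(X\cap\underline t)\setminus\underline r}^{F_{\underline r}}*h_{\underline r},
\]
which is an ${\bf F}$-decomposition of $\nabla_{X\setminus\underline t}f\in\mathcal F(X\cap\underline t)$. By the uniqueness part of Theorem \ref{gesta} (applied to $X\cap\underline t$ in place of $X$), this sum must coincide term-by-term with $\sum_{\underline r\in S_{X\cap\underline t}}g_{\underline r}$, so $g_{\underline r}=\nabla_{X\setminus\underline t}f_{\underline r}$, as required. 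The only mildly subtle step is the bookkeeping identifying ``rational subspaces of $V$ relative to $X$ that lie in $\underline t$'' with ``rational subspaces of $\underline t$ relative to $X\cap\underline t$'' and matching the two parametrizations of regular faces via projection modulo $\underline t^\perp$; but both are straightforward from the definitions, so I do not expect any real obstacle.
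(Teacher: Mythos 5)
Your proof is correct and is precisely the argument the authors have in mind when they write that the proof ``is skipped, as it is very similar to preceding proofs'': Proposition \ref{proar}(iii) handles the vanishing for $\underline r\not\subseteq\underline t$, the mother formula (\ref{eqmother}) with $(X\cap\underline r)\setminus(\underline t\cap\underline r)=\emptyset$ identifies $\nabla_{X\setminus\underline t}f_{\underline r}$ as an element of the $F_{\underline r}$-component of $\mathcal F(X\cap\underline t)$, and uniqueness of the direct-sum decomposition in Theorem \ref{gesta} applied to $X\cap\underline t$ finishes the identification $g_{\underline r}=\nabla_{X\setminus\underline t}f_{\underline r}$. The bookkeeping you flag at the end (identifying $S_{X\cap\underline t}$ with $\{\underline r\in S_X:\underline r\subseteq\underline t\}$ and matching the induced regular faces modulo $\underline t^\perp$) is indeed routine and correctly handled.
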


\begin{remark}
It follows from the previous theorems and  the properties of $DM(X)$  that, for every  $E$, we could define a space $\mathcal F_E(X)$ of $E$ valued functions  as in Definition \ref{spin}  and we have  $\mathcal F_E(X)=E\otimes_{\mathbb  Z}\mathcal F (X).$
\end{remark}

\subsection{Localization theorem\label{211}} \begin{definition}
By the word {\it
tope},  we mean   a connected component of the complement in $V$
 of the union of
the hyperplanes generated by subsets of $X$.
\end{definition}
We introduce also  $B(X):=\{\sum_{i=1}^mt_ia_i,\ 0\leq t_i\leq 1\}$   the {\it
zonotope}  generated by $X$. $B(X)$ is a compact convex polytope which appears  in several ways in the theory and plays a fundamental role.

  We will show that,  for
every element $f\in \mathcal F(X)$, the function  $f(\gamma)$
coincides with  a quasi--polynomial on the sets $(\tau -B(X))\cap
\Gamma$ as $\tau$ varies over all topes (we simply say $f$ is a quasi--polynomial on $\tau-B(X)$).

  \begin{definition}
Let $\tau$ be a tope and
  $\underline r$ be a proper rational subspace. We say that
  a regular face $F_{\underline r}$ for $X\setminus \underline r$
  is non--positive on $\tau $ if there exists  $u_{\underline r}\in F_{\underline r}$ and $x_0\in \tau$ such that $\langle u, x_0\rangle <0$.
  \end{definition}

Given   $x_0\in \tau$,
 it
is always  possible to choose a regular face $F_{\underline r}\subset \underline r^{\perp}$ for $X\setminus \underline r$ such
that  $x_0$ is negative on some vector $u_{\underline r}\in F_{\underline r}$, since  the projection of $x_0$ on
$V/{\underline r}$ is not zero.

 Let  ${\bf F}=\{ F_{\underline r}\}$ be     a $X$--regular  collection.  We shall say that $\{\bf F\}$ is non--positive on $\tau$ if each $F_{\underline r}$ is  non--positive on $\tau$.

Let $f\in \mathcal F(X)$ and let
 $f=\sum f_{\underline
r}$ be the ${\bf F}$ de\-com\-po\-si\-tion of $f$.

\begin{remark}\label{abc}
This choice of ${\bf F}$  has
the effect of pushing the supports of the elements $f_{ \underline
r}$ ($\underline r\neq V$) away from  $\tau$.  See Figures \ref{uu}, \ref{uuu} which describe the ${\bf
F}$ decomposition of the partition function $\mathcal P_X$ for
$X:=[a,b,c]$  with $a:=\omega_1,b:=\omega_2,c:=\omega_1+\omega_2$
in the lattice $\Gamma:=\mathbb Z \omega_1\oplus \mathbb Z
\omega_2$. Thus the content of Theorem \ref{larcarsexplicit}
is very similar to Paradan's localization theorem \cite{par2}.
\end{remark}

Our previous claim then
follows from the explicit construction below.

\begin{theorem} [Localization theorem]\label{larcarsexplicit}
Let $\tau$ be a tope. Let ${\bf F}=\{ F_{\underline r}\}$ be  a $X$--regular  collection  non--positive  on $\tau$.

 The
component $f_V$ of the ${\bf F}$ de\-com\-po\-si\-tion $f=\sum_{\underline r\in S_X} f_{\underline
r}$ is a
quasi--polynomial function in $DM(X)$ such that $f=f_V$ on
$\tau-B(X)$.
\end{theorem}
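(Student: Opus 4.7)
The plan is to show the stronger statement that each summand $f_{\underline r}$ with $\underline r\neq V$ vanishes on $(\tau-B(X))\cap\Gamma$; combined with $f_V\in DM(X)$ from Theorem \ref{gesta}, this will yield $f=f_V$ on $\tau-B(X)$. Writing $f_{\underline r}=\mathcal P^{F_{\underline r}}_{X\setminus\underline r}*g_{\underline r}$ with $g_{\underline r}\in DM(X\cap\underline r)$ supported in $\underline r\cap\Gamma$, Lemma \ref{lafunteta} gives
\[
\mathrm{supp}(f_{\underline r})\subset \underline r-\sum_{b\in B} b+C(F_{\underline r},X\setminus\underline r).
\]
A direct zonotope computation shows $-\sum_{b\in B} b+B(X)=B(X\cap\underline r)+B(A\sqcup(-B))\subset \underline r+C(F_{\underline r},X\setminus\underline r)$; combined with the support inclusion this gives $\mathrm{supp}(f_{\underline r})+B(X)\subset \underline r+C(F_{\underline r},X\setminus\underline r)$, so the vanishing of $f_{\underline r}$ on $\tau-B(X)$ reduces to the geometric disjointness
\[
(\ast)\qquad \tau\cap\bigl(\underline r+C(F_{\underline r},X\setminus\underline r)\bigr)=\emptyset.
\]

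I would prove $(\ast)$ via the extreme rays of the closed polyhedral cone $\overline{F_{\underline r}}\subset\underline r^\perp$. Because $X\setminus\underline r$ spans $V$ modulo $\underline r$, this cone is pointed, and each extreme ray arises as the intersection of $d-1$ walls of the form $\{u\in\underline r^\perp:\langle u,a\rangle=0\}$ with $a\in X\setminus\underline r$ and $d=\dim\underline r^\perp$. For a vector $u$ on such a ray with associated $a_{i_1},\dots,a_{i_{d-1}}$, the relations $u\in\underline r^\perp$ and $\langle u,a_{i_j}\rangle=0$ force
\[
u^\perp=\mathrm{span}\bigl(\underline r,\,a_{i_1},\dots,a_{i_{d-1}}\bigr),
\]
which is a hyperplane of $V$ generated by a subset of $X$. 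The non-positivity hypothesis supplies $u'\in F_{\underline r}$ and $x_0\in\tau$ with $\langle u',x_0\rangle<0$; writing $u'$ as a non-negative combination of extreme-ray generators, at least one such generator $u$ satisfies $\langle u,x_0\rangle<0$. Because $u^\perp$ is a rational hyperplane, the tope $\tau$ does not meet $u^\perp$, so by connectedness $\langle u,\cdot\rangle$ is strictly negative throughout $\tau$. On the other hand $u\in\overline{F_{\underline r}}$ pairs non-negatively with every element of $A\cup(-B)$ and vanishes on $\underline r$, giving $\langle u,\cdot\rangle\geq 0$ on $\underline r+C(F_{\underline r},X\setminus\underline r)$. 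The two sets thus lie in opposite closed half-spaces, establishing $(\ast)$.

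The delicate step is the extreme-ray identification: it upgrades the purely existential non-positivity hypothesis—negativity at a single point $x_0$—into the pointwise statement that $\langle u,\cdot\rangle<0$ on all of $\tau$, by replacing an arbitrary $u'\in F_{\underline r}$ with an extreme generator $u$ whose annihilating hyperplane is forced to be rational and therefore cannot cut the tope.
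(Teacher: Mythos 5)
Your proof is correct and takes essentially the same route as the paper's. Both arguments combine the support bound from Lemma~\ref{lafunteta} with a zonotope translation to reduce the claim to the geometric disjointness $\tau\cap(\underline r + C(F_{\underline r}, X\setminus\underline r))=\emptyset$ for every proper rational $\underline r$, and both deduce this disjointness from the non--positivity hypothesis together with the rationality of the relevant hyperplanes. The one genuine difference is local to that last step: the paper works with the primal cone $\underline r + C(F_{\underline r}, X\setminus\underline r)$, remarking that its facets lie on $X$--rational hyperplanes, so the connected tope $\tau$ must be either contained in or disjoint from the cone, with the element $u_{\underline r}$ then ruling out containment; you instead work with $\overline{F_{\underline r}}$, which is precisely the dual cone of $\underline r + C(F_{\underline r}, X\setminus\underline r)$, pick out an extreme ray $u$ that is negative at $x_0$, and observe that $u^\perp$ is an $X$--rational hyperplane so $u$ stays negative on all of $\tau$. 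These are dual formulations of the same observation --- facets of a polyhedral cone correspond to extreme rays of its dual --- so the essential content is identical; your version has the small merit of spelling out, via the rank computation on $\underline r + \mathrm{span}(a_{i_1},\dots,a_{i_{d-1}})$, exactly why the supporting hyperplanes are $X$--rational, a point the paper dispatches as clear ``by construction.''
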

 \begin{figure}[!h]
\begin{center}\includegraphics{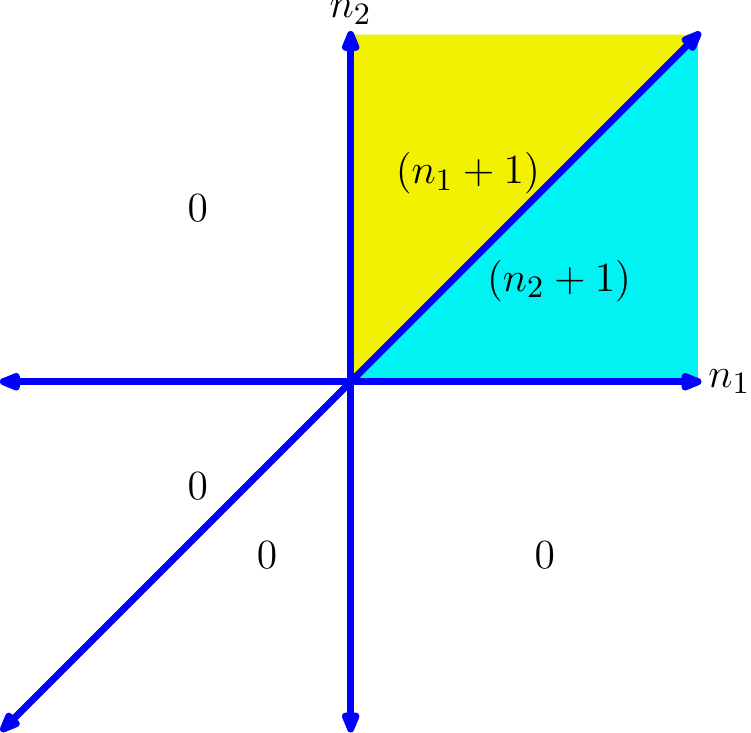}
\end{center}\caption{The partition function of $X:=[a,b,c]$}\label{uu}
\end{figure}

\begin{figure}[!h]
\begin{center}\includegraphics{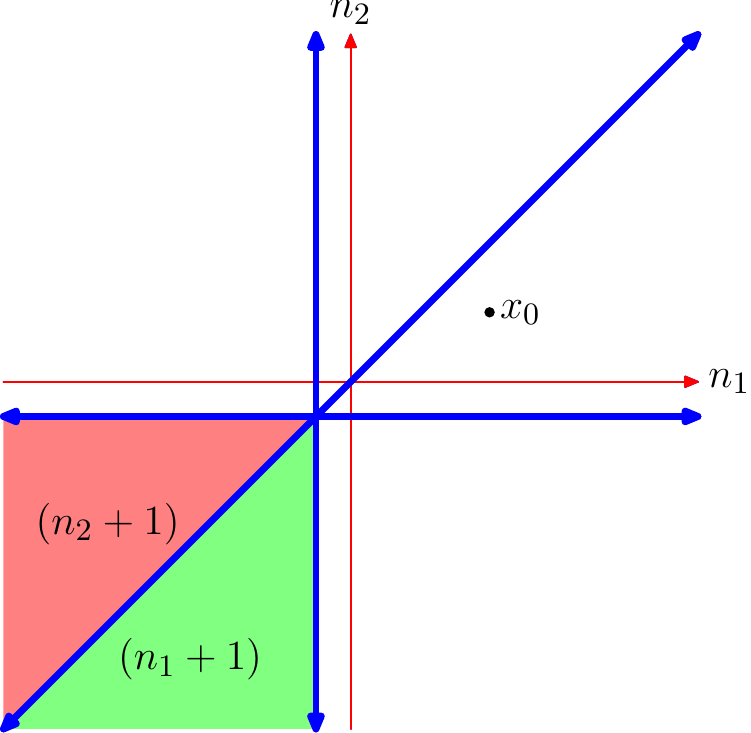}
\end{center}
$$+$$

\hskip-0.9cm \includegraphics{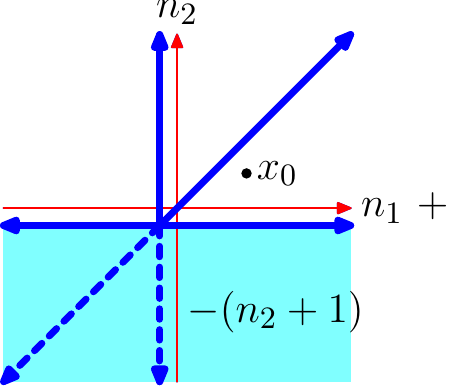} \includegraphics{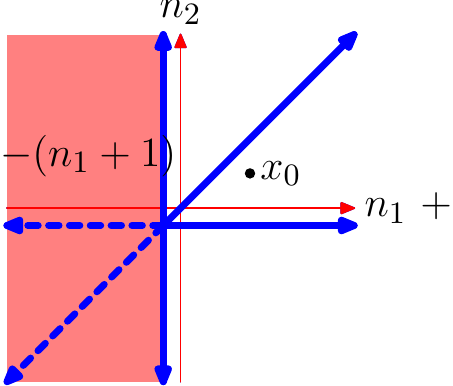} \includegraphics{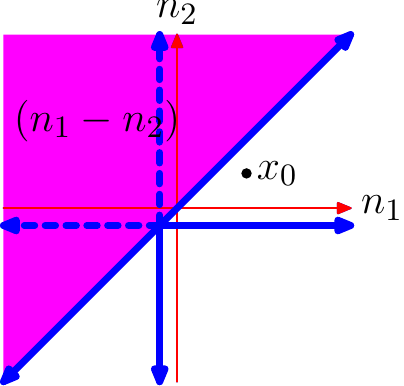}

$$+$$

\begin{center}\includegraphics{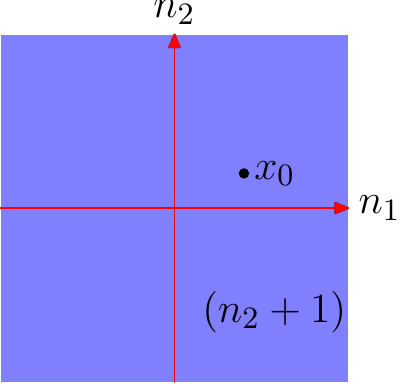}
\end{center}

\caption{ $\bf{F}$ decomposition of the partition function of
$X:=[a,b,c]$ for $\bf{F}$  non--positive  on $\tau$}\label{uuu}
\end{figure}
\begin{proof}

 Let
$\underline r$ be a proper rational space and $f_{\underline r}={\mathcal P}_{X\setminus\underline r}^{F_{\underline r}}* k_{\underline
r}$ where $k_{\underline r}\in DM(X\cap {\underline r})$.   In the notation of Lemma
\ref{lafunteta}, the support of $f_{\underline r}$ is contained in
the polyhedron ${\underline r}-\sum_{b\in B}b+C(F_{\underline
r},{X\setminus\underline r})\subset {\underline r}+C(F_{\underline r},{X\setminus\underline r})$.  This last
polyhedron is convex and, by construction, it has a boundary
limited by hyperplanes which are rational with respect to $X$.
Thus,  either $\tau\subset {\underline r}+C(F_{\underline r},{X\setminus\underline r})$ or
$\tau\cap ({\underline r}+C(F_{\underline r},{X\setminus\underline r}))=\emptyset$. Take $u_{\underline r}\in F_{\underline r}$  and $x_0\in \tau$ so that   $u_{\underline r}(x_0)<0$. As
$u_{\underline r}\geq 0$ on ${\underline r} +C(F_{\underline r}
,{X\setminus\underline r})$   it follows that $\tau$ is not
a subset of ${\underline r} +C(F_{\underline r} ,{X\setminus\underline r})$, so that
$\tau\cap ({\underline r} +C(F_{\underline r} ,{X\setminus\underline r}))=\emptyset$.

In fact, we claim that $\tau-B(X)$ does not intersect the support
${\underline r}-\sum_{b\in B}b+C(F_{\underline r},{X\setminus\underline r}) $ of
$f_{\underline r}$. Indeed, otherwise,  we would have an equation
$v-\sum_{x\in X}t_x x=s+\sum_{a\in A} k_a a+\sum_{b\in B} h_b(-b)$
with $v\in \tau$, $0\leq t_x\leq 1$, $k_a\geq 0,\ h_b\geq 1,\, s\in \underline r$. This
would imply that $v\in \underline r+C(F_{\underline r} ,{X\setminus\underline r}))$,  a contradiction. Thus
$f$ coincides with the quasi--polynomial $f_V$ on $\tau-B(X)$.
\end{proof}
One should remark that a quasi--polynomial is completely determined by the values  that it takes on $\tau-B(X)$,   thus $f_V$ is independent on the construction.
\begin{definition}
We shall denote by $f^\tau$ the quasi--polynomial coinciding with
$f$ on $\tau-B(X)$.
\end{definition}

Let us remark that the open subsets $\tau-B(X)$ cover $V$, when
$\tau$ runs over the topes of $V$ (with possible overlapping).
Thus the element $f\in \mathcal F(X)$ is entirely determined by
the quasi--polynomials $f^{\tau}$.

\begin{example}
In Figure \ref{a2},  for each tope $\tau$,  the set of integral points
in $\tau-B(X)$  is contained in  one of the  affine {\bf closed} cones limited by thick
lines.  We are   showing in color the convex envelop  of the integral points in    $\tau-B(X)$ and {\it not} the  larger  open set $\tau-B(X)$.

\begin{figure}[!h]
\begin{center}\includegraphics{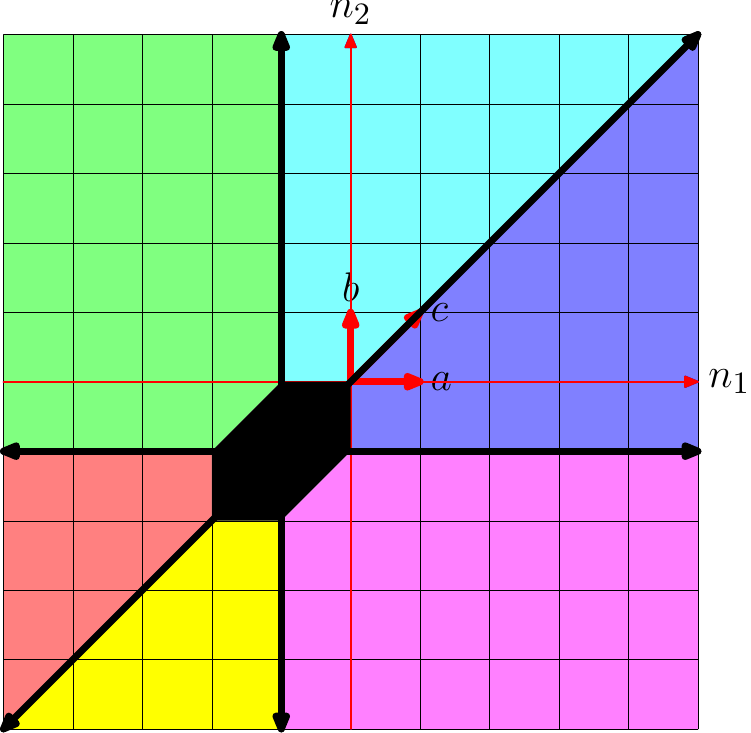}
\end{center}\caption{Translated topes of $X$. The zonotope $-B(X)$ is in black.}\label{a2}
\end{figure}

\end{example}

For  collections arising from scalar products, one can give an explicit formula for the   decomposition of an element $f\in \mathcal F(X)$.

Let us choose a scalar product on $V$, and identify $V$ and $U$ with respect to this scalar product.
Given a point $\beta$ in $V$  and a rational subspace $\underline r$ in $S_X$, we
write  $$\beta=p_{\underline r}\beta+p_{\underline r^{\perp}}\beta$$
with
$p_{\underline r}\beta\in \underline r$
and $p_{\underline r^{\perp}}\beta$ in $\underline r^{\perp}.$

\begin{definition}
We say that $\beta\in V$ is   generic with respect to $\underline r$ if
$p_{\underline r}\beta$  is in a tope  $\tau(p_{\underline r}\beta)$
for the sequence   $X\cap \underline r$,
and  $p_{\underline r^{\perp}}\beta \in
\underline r^{\perp}$  is regular for $X\setminus \underline r$.

\end{definition}
We clearly have
\begin{proposition}
The set of $\beta$ which are not generic with respect to $\underline r$ is a union of finitely many hyperplanes.
\end{proposition}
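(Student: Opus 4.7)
The plan is to unpack the definition of \emph{generic with respect to $\underline r$} and observe that each of its two clauses fails precisely on a finite union of affine hyperplanes in $V$. Concretely, $\beta$ fails to be generic iff either (a) $p_{\underline r}\beta$ lies on some hyperplane of $\underline r$ spanned by a sublist of $X\cap\underline r$, or (b) there exists $a\in X\setminus\underline r$ with $\langle p_{\underline r^\perp}\beta,a\rangle=0$. I will show that each of (a) and (b) is described by finitely many hyperplanes of $V$.

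For (b), note that $p_{\underline r^\perp}$ is a self-adjoint projection, so for each $a\in X\setminus\underline r$ we have $\langle p_{\underline r^\perp}\beta,a\rangle=\langle\beta,p_{\underline r^\perp}a\rangle$. Since $a\notin\underline r$ the vector $p_{\underline r^\perp}a$ is nonzero, and hence this linear form on $\beta$ defines a genuine hyperplane $H_a=\{\beta\in V:\langle\beta,p_{\underline r^\perp}a\rangle=0\}$ in $V$. As $X\setminus\underline r$ is finite, clause (b) contributes the finite union $\bigcup_{a\in X\setminus\underline r}H_a$.

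For (a), there are only finitely many sublists of $X\cap\underline r$ that span a hyperplane of $\underline r$, hence only finitely many hyperplanes $H'\subset\underline r$ arising this way. Each such $H'$ is the zero set of some nonzero $u'\in\underline r$ (using the chosen scalar product to identify $\underline r^*$ with $\underline r$). The condition $p_{\underline r}\beta\in H'$ translates to $\langle u',p_{\underline r}\beta\rangle=0$, and since $u'\in\underline r$ this simplifies to $\langle u',\beta\rangle=0$, again a hyperplane of $V$. So clause (a) gives finitely many hyperplanes of $V$ as well.

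Taking the union over all hyperplanes produced by (a) and (b) exhibits the set of non-generic $\beta$ as a finite union of hyperplanes, as claimed. There is no real obstacle; the only point requiring a little care is keeping track of which scalar product and which projection is being used when converting the conditions on $p_{\underline r}\beta$ and $p_{\underline r^\perp}\beta$ into linear conditions on $\beta$ itself, but self-adjointness of the orthogonal projections handles this cleanly.
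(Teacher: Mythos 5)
Your proof is correct and is precisely the argument the paper leaves implicit (the proposition appears with the words "We clearly have" and no proof). Unpacking both clauses of the definition and using self-adjointness of the orthogonal projections to convert the conditions on $p_{\underline r}\beta$ and $p_{\underline r^\perp}\beta$ into linear conditions on $\beta$ is exactly the intended reasoning.
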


By Theorem  \ref{larcarsexplicit},   if $f\in \mathcal F(X)$, the element $\nabla_{X\setminus \underline r}f$ is in
$\mathcal F(X\cap \underline r)$ and coincides with a quasi--polynomial
$(\nabla_{X\setminus \underline r}f)^{\tau}\in DM(X\cap \underline r)$ on  each tope $\tau$
for the system $X\cap \underline r$.

\begin{theorem}\label{before}
 Let  $\beta\in V$ be
 generic   with respect to all the rational subspaces $\underline r$. Let $F_{\underline r}^\beta$ be the unique regular face for $X\setminus \underline r$ containing $p_{\underline r^{\perp}}\beta$.

 Then
$$f=\sum_{\underline r\in S_X} {\mathcal
P}_{X\setminus\underline r}^{-F_{\underline r}^\beta}*(\nabla_{X\setminus \underline r}f)^{\tau(p_{\underline r}\beta)}.$$
\end{theorem}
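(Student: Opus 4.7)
The plan is to recognize the sum in the theorem as the unique $\mathbf F$-decomposition (Theorem \ref{gesta}) of $f$ associated to the $X$-regular collection $\mathbf F = (-F_{\underline r}^\beta)_{\underline r \in S_X}$. That decomposition takes the form
$$f = \sum_{\underline r \in S_X} \mathcal P_{X\setminus \underline r}^{-F_{\underline r}^\beta} * k_{\underline r},\qquad k_{\underline r} \in DM(X \cap \underline r),$$
with $k_{\underline r}$ uniquely determined, so the whole task reduces to identifying $k_{\underline r}$ with the quasi-polynomial $(\nabla_{X \setminus \underline r}f)^{\tau(p_{\underline r}\beta)}$ for each $\underline r$.

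I would compute $k_{\underline r}$ in two different ways and match them. First, Proposition \ref{proar}(i) gives $\nabla_{X \setminus \underline r}(\mathcal P_{X \setminus \underline r}^{-F_{\underline r}^\beta} * k_{\underline r}) = k_{\underline r}$, and Proposition \ref{recurrence} (applied with $\underline t = \underline r$) then identifies $k_{\underline r}$ with the top-dimensional summand --- the $DM(X \cap \underline r)$-component --- of the induced $\mathbf F$-decomposition of $\nabla_{X\setminus \underline r}f \in \mathcal F(X \cap \underline r)$. Second, I would compute this same $DM(X\cap\underline r)$-component via the localization Theorem \ref{larcarsexplicit} applied to $\nabla_{X\setminus \underline r}f$, after verifying that the induced $X\cap\underline r$-regular collection is non-positive on the tope $\tau(p_{\underline r}\beta) \subset \underline r$. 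Once non-positivity is established, Theorem \ref{larcarsexplicit} identifies the component with the quasi-polynomial $(\nabla_{X\setminus \underline r}f)^{\tau(p_{\underline r}\beta)}$, and matching the two expressions gives the formula.

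The main step is checking this non-positivity. For a proper rational $\underline s \subsetneq \underline r$ the induced face at $\underline s$ is the image of $-F_{\underline s}^\beta$ modulo $\underline r^\perp$, which sits in the orthogonal complement $\underline r \cap \underline s^\perp$ of $\underline s$ inside $\underline r$. Since $\underline s \subseteq \underline r$ gives the orthogonal splitting $\underline s^\perp = (\underline r \cap \underline s^\perp) \oplus \underline r^\perp$, the projection of $p_{\underline s^\perp}\beta \in F_{\underline s}^\beta$ to $\underline r \cap \underline s^\perp$ is $p_{\underline r \cap \underline s^\perp}\beta$, so the induced face contains $-p_{\underline r \cap \underline s^\perp}\beta$. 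The decomposition $p_{\underline r}\beta = p_{\underline s}\beta + p_{\underline r \cap \underline s^\perp}\beta$ then yields
$$\langle -p_{\underline r \cap \underline s^\perp}\beta,\; p_{\underline r}\beta\rangle = -|p_{\underline r \cap \underline s^\perp}\beta|^2,$$
and the hard point is that this is strictly negative: if $p_{\underline r \cap \underline s^\perp}\beta = 0$ then $p_{\underline r}\beta$ would lie in $\underline s$, but $\underline s$ is contained in a hyperplane of $\underline r$ rational for $X \cap \underline r$ (extend $X \cap \underline s$ to a hyperplane by elements of $X \cap \underline r$), contradicting the genericity assumption that $p_{\underline r}\beta$ lies in a tope of $X \cap \underline r$. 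This gives non-positivity on $\tau(p_{\underline r}\beta)$ and closes the argument.
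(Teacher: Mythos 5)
Your argument is correct and follows the paper's proof essentially verbatim: set up the $\mathbf F$-decomposition from Theorem~\ref{gesta} for the collection $(-F_{\underline r}^\beta)$, use Proposition~\ref{recurrence} to identify each $k_{\underline r}$ as the $DM(X\cap\underline r)$-component of the induced decomposition of $\nabla_{X\setminus\underline r}f$, and then invoke Theorem~\ref{larcarsexplicit} after checking that the induced $X\cap\underline r$-regular collection is non-positive on $\tau(p_{\underline r}\beta)$ via the scalar product. In fact you are slightly more careful than the paper at the final step: you obtain the correct value $-\|p_{\underline r\cap\underline s^\perp}\beta\|^2$ for the pairing (the paper writes $-\|u_{\underline t}\|^2$, which is only its value after projecting to $\underline r$), and you explicitly justify that this quantity is strictly negative by observing that $p_{\underline r\cap\underline s^\perp}\beta=0$ would force $p_{\underline r}\beta$ onto an $(X\cap\underline r)$-rational hyperplane, contradicting genericity.
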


\begin{proof}
By the hypotheses made on $\beta$ the collection ${\bf F}=\{F_{\underline r}\}$ is  a $X-$regular collection.   Set
 $f= \sum_{\underline r\in S_X} {\mathcal
P}_{X\setminus\underline r}^{F_{\underline r}}*q_{\underline r}$
with $q_{\underline r}\in DM(X\cap \underline r)$.
We apply Proposition \ref{recurrence}. It follows that
$q_{\underline r}$ is the component in  $DM(X\cap \underline r)$ in the decomposition of
$\nabla_{X\setminus \underline r}f\in \mathcal F(X\cap \underline r)$, with
respect of the $X\cap \underline r$-regular collection induced by ${\bf F}$.
  Set $u_{\underline t}=-p_{\underline t^{\perp}}\beta$.
Remark that,  for $\underline t\subset \underline r$, we have
 $\langle u_{\underline t},p_{\underline r}\beta\rangle=-\|u_{\underline t }\|^2$, so that  each $u_{\underline t}$ is negative on $p_{\underline r}\beta$. Thus the formula follows from
Theorem \ref{larcarsexplicit}.
\end{proof}

\subsection{Wall crossing formula}

We first develop a general formula describing how the functions
$f^{\tau}$  change when crossing a wall. Then we apply this to the
partition function ${\mathcal P }_X $ and deduce that it is a
quasi--polynomial on $\mathfrak c-B(X)$, where $\mathfrak c$ is a
big cell.

Let $H$ be a rational hyperplane, and let $u\in
U$ be an equation of the hyperplane. Then the two open faces in $H^{\perp}$ are the half-lines
$F_H=\mathbf R_{>0}u$ and $-F_H$.

\begin{lemma}
If $q\in DM(X\cap H)$, then  $w:=(\mathcal P_{X\setminus H}^{F_{H}}- \mathcal
P_{X\setminus H}^{-F_{H}})*q$ is an element of $DM(X)$.
\end{lemma}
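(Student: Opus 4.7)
The plan is to verify directly that $w$ satisfies every cocircuit equation, i.e., $\nabla_{X\setminus H'}w=0$ for every rational hyperplane $H'$ of $V$, which by the definition of $DM(X)$ is what needs to be shown. First I would note that $q\in DM(X\cap H)\subset \mathcal F(X\cap H)$, so Proposition \ref{proar} (i) guarantees that each summand $\mathcal P_{X\setminus H}^{\pm F_H}*q$ lies in $\mathcal F(X)$, hence so does $w$. This ensures the convolutions are well-defined and $\nabla_{X\setminus H'}w$ makes sense.

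I would then split into two cases according to whether $H'=H$ or not. For $H'\neq H$, the hyperplane $H$ is not contained in $H'$, so $H'\cap H$ is a proper subspace of $H$; in particular $H'\cap H\neq H$. This is exactly the hypothesis of Proposition \ref{proar} (iii) applied with $\underline r = H$ and $\underline t = H'$, for each regular face $\pm F_H$. Therefore
\[
\nabla_{X\setminus H'}\bigl(\mathcal P_{X\setminus H}^{F_H}*q\bigr)=0=\nabla_{X\setminus H'}\bigl(\mathcal P_{X\setminus H}^{-F_H}*q\bigr),
\]
and by linearity $\nabla_{X\setminus H'}w=0$.

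For $H'=H$, I would invoke Proposition \ref{promother} (i) with $\underline r=\underline t=H$: since $X\setminus H$ is disjoint from $H$, we have $(X\setminus H)\cap H=\emptyset$, whence $\mathcal P_{(X\setminus H)\cap H}^{\pm F_H}=\delta_0$. Convolving with $q$ and using that $\delta_0$ is the convolution unit,
\[
\nabla_{X\setminus H}\bigl(\mathcal P_{X\setminus H}^{\pm F_H}*q\bigr)=\delta_0*q=q,
\]
so $\nabla_{X\setminus H}w=q-q=0$. Combining the two cases, $w$ kills every cocircuit operator and hence lies in $DM(X)$.

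There is essentially no obstacle here: the only substantive observation is the case split on $H'$, after which the two Propositions \ref{proar} (iii) and \ref{promother} (i) do all the work. The key conceptual point is that the difference $\mathcal P_{X\setminus H}^{F_H}-\mathcal P_{X\setminus H}^{-F_H}$ is designed precisely to annihilate the one remaining equation (for $\underline r=H$) on which the two fundamental solutions of $\nabla_{X\setminus H}(\,\cdot\,)=\delta_0$ disagree, while both already vanish on all other cocircuit equations when convolved with something supported on $H$.
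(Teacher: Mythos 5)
Your proof is correct and follows the same route as the paper's: split into the cases $H'\neq H$ (use Proposition \ref{proar}~(iii) with $\underline r=H$, $\underline t=H'$, noting $H'\cap H\neq H$) and $H'=H$ (use $\nabla_{X\setminus H}\mathcal P_{X\setminus H}^{\pm F_H}=\delta_0$ so the two contributions cancel). The only cosmetic difference is that you derive the $H'=H$ case from Proposition \ref{promother}~(i) rather than citing Formula~(\ref{rinve}) or Lemma~\ref{lafunteta} directly, but the computation is identical.
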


\begin{remark}
In \cite{BV}, a one-dimensional residue formula is given for $w$
allowing us to compute it.
\end{remark}

\begin{proof}
If ${\underline t}\in S_X$ is different from $H$,
$\nabla_{X\setminus {\underline t} }(\mathcal
P_{X\setminus H}^{F_{H}}*q)=\nabla_{X\setminus {\underline t} }(\mathcal
P_{X\setminus H}^{-F_{H}}*q)=0$, as  follows from Proposition \ref{proar} {\it
iii)}. If ${\underline r} =H$, then $\nabla_{X\setminus H}
(\mathcal P_{X\setminus H}^{F_{H}}*q-\mathcal P_{X\setminus H}^{-F_{H}}*q)=q-q=0$.
\end{proof}

Assume that $\tau_1,\ \tau_2$ are two adjacent topes,  namely
$\overline \tau_1\cap  \overline \tau_2$ spans a hyperplane $H$.
The hyperplane $H$ is a rational subspace. Let $\tau_{12}$ be the
unique tope for $X\cap H$ such that $ \overline \tau_1\cap
\overline \tau_2 \subset \overline{\tau_{12}}$ (see Figure
\ref{tau12}).

\begin{example}
Let $C$ be the
 cone generated by the
vectors $a:=\omega_3+\omega_1$, $b:=\omega_3+\omega_2$,
$c:=\omega_3-\omega_1$, $d:=\omega_3-\omega_2$  in a
$3$-dimensional space $V:=\mathbb R \omega_1 \oplus \mathbb R
\omega_2\oplus \mathbb R \omega_3$. Figure \ref{tau12} represents
the section of $C$  cut by the   affine hyperplane containing
$a,b,c,d$. We consider $X:=[a,b,c,d]$.

We show  in  section, on the left of the picture,  two topes
$\tau_1,\tau_2$ adjacent along the hyperplane $H$ generated by
$b,d$ and, on the right, the tope $\tau_{12}$. The list $X\cap H$
is $[b,d]$. The closure of the tope $\tau_{12}$ is ``twice bigger ''
than $\overline \tau_1\cap \overline \tau_2$.

\end{example}

\begin{figure}[!h]
\begin{center}\includegraphics{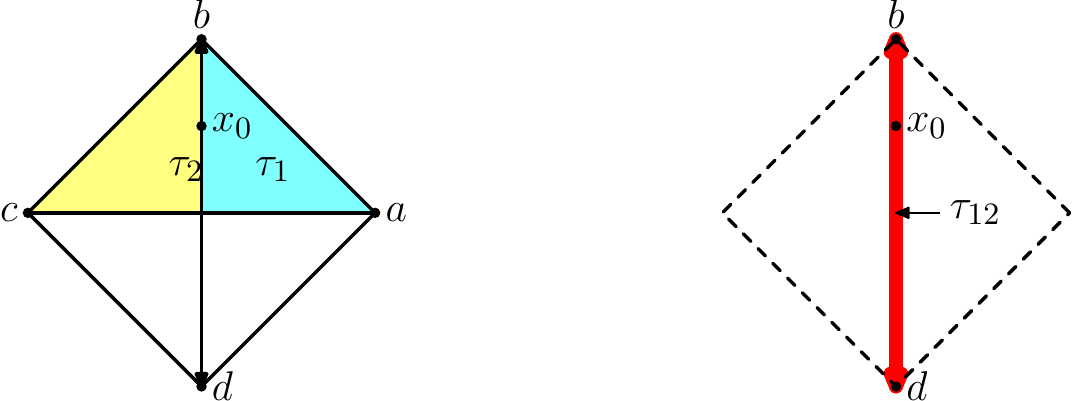}
\end{center}\caption{Two adjacent  topes of $X:=[a,b,c,d]$}\label{tau12}
\end{figure}

Let $f\in \mathcal F(X)$. The function $\nabla_{X\setminus H} f$
is an element of $\mathcal F(H\cap X)$, thus,  by Theorem
\ref{larcarsexplicit}, there exists a quasi--polynomial
$(\nabla_{X\setminus H}f)^{\tau_{12}}$ on $H$ such that
$\nabla_{X\setminus H}f$ agrees with $(\nabla_{X\setminus
H}f)^{\tau_{12}}$ on $\tau_{12}$.

\begin{theorem}\label{wcf}
Let $\tau_1,\tau_2,H,\tau_{12}$ be as before and
$f\in \mathcal F(X)$.  Let $F_H$ be the  half line  in  $H^{\perp}$ positive on $\tau_1$.
 Then
\begin{equation}\label{wallcrossing}
f^{\tau_1}-f^{\tau_2}=( \mathcal P_{X\setminus H}^{F_{H}} -\mathcal P_{X\setminus H}^{-F_{H} })*(\nabla_{X\setminus
H}f)^{\tau_{12}}.
\end{equation}
\end{theorem}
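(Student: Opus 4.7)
The plan is to apply Theorem \ref{before} at two carefully chosen symmetric points $\beta_1 \in \tau_1$ and $\beta_2 \in \tau_2$ and take the difference. Fix a point $x_0$ in the relative interior of the common wall $\overline{\tau_1} \cap \overline{\tau_2}$; since this relative interior is a nonempty open subset of $H$ and, for generic choice, avoids every wall of $X \cap H$, the point $x_0$ lies in $\tau_{12}$. Let $n \in H^{\perp}$ be the unit vector with $F_H = \mathbb{R}_{>0} n$ (so $n$ is positive on $\tau_1$), and set $\beta_1 := x_0 + \epsilon n$ and $\beta_2 := x_0 - \epsilon n$ for a small $\epsilon > 0$. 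For $x_0$ generic and $\epsilon$ small enough, $\beta_1 \in \tau_1$, $\beta_2 \in \tau_2$, and both are generic with respect to every rational subspace, so Theorem \ref{before} applies to each.

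The key observation is that $\beta_1 - \beta_2 = 2\epsilon n \in H^{\perp}$, and this displacement affects only the summands indexed by $\underline r = H$ and $\underline r = V$. For $\underline r = V$ the two $V$-terms are $\delta_0 * f^{\tau_1} = f^{\tau_1}$ and $f^{\tau_2}$, contributing $f^{\tau_1} - f^{\tau_2}$ to the difference. For $\underline r = H$, $p_H \beta_i = x_0 \in \tau_{12}$ so $\tau(p_H \beta_i) = \tau_{12}$, while $p_{H^{\perp}} \beta_1 = \epsilon n \in F_H$ and $p_{H^{\perp}} \beta_2 = -\epsilon n \in -F_H$, giving $F_H^{\beta_1} = F_H$ and $F_H^{\beta_2} = -F_H$; hence the difference of the two $H$-terms is $(\mathcal P_{X\setminus H}^{-F_H} - \mathcal P_{X\setminus H}^{F_H}) * (\nabla_{X\setminus H} f)^{\tau_{12}}$. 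For every $\underline r \neq H, V$ I claim that the pair $(-F_{\underline r}^{\beta_i}, \tau(p_{\underline r} \beta_i))$ is the same for $i=1,2$, so those summands cancel pairwise. Granting this, subtracting the two instances of Theorem \ref{before} yields
$$0 = (f^{\tau_1} - f^{\tau_2}) + (\mathcal P_{X\setminus H}^{-F_H} - \mathcal P_{X\setminus H}^{F_H}) * (\nabla_{X\setminus H} f)^{\tau_{12}},$$
which rearranges to the claimed wall-crossing formula.

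The main obstacle is verifying the pairwise cancellation for $\underline r \neq H, V$. If $\underline r \subsetneq H$, then $n \in H^{\perp} \subset \underline r^{\perp}$, so $p_{\underline r} \beta_1 = p_{\underline r} \beta_2 = p_{\underline r} x_0$; moreover, for every $a \in (X \cap H) \setminus \underline r$ one has $\langle n, a \rangle = 0$, so the displacement $p_{\underline r^{\perp}}\beta_1 - p_{\underline r^{\perp}}\beta_2 = 2\epsilon n$ is tangent to the wall $a^{\perp} \cap \underline r^{\perp}$ and does not flip the sign of $\langle \cdot, a\rangle$, while for $a \in X \setminus H$ the displacement is $O(\epsilon)$ transverse and, for small $\epsilon$ and generic $x_0$, does not cross the wall. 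If $\underline r \not\subset H$ (and $\underline r \neq V$), both $p_{\underline r}(\beta_1 - \beta_2)$ and $p_{\underline r^{\perp}}(\beta_1 - \beta_2)$ are $O(\epsilon)$, so by the same smallness-and-genericity argument the tope $\tau(p_{\underline r}\beta_i)$ and the face $F_{\underline r}^{\beta_i}$ are independent of $i$. These requirements amount to finitely many open conditions on $(x_0, \epsilon)$, hence a valid choice exists.
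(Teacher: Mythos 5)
Your strategy—subtract two instances of Theorem~\ref{before} at symmetrically displaced points $\beta_1,\beta_2$—is attractive, and the bookkeeping for the $\underline r=V$ and $\underline r=H$ terms is correct. But the claimed pairwise cancellation for $\underline r\neq H,V$ has a genuine gap, and the gap is not repairable by genericity of $x_0$ alone.

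The problem is in the case $\underline r\subsetneq H$ with $a\in X\setminus H$. You assert that the sign of $\langle p_{\underline r^\perp}\beta_i, a\rangle$ is stable because it equals $\langle p_{\underline r^\perp}x_0, a\rangle + O(\epsilon)$ and the first term is nonzero for generic $x_0$. That first term, however, is the restriction to $H$ of the linear form $x\mapsto\langle p_{\underline r^\perp}x,a\rangle=\langle x, p_{\underline r^\perp}a\rangle$, and this restriction is identically zero on $H$ precisely when $p_{\underline r^\perp}a\in H^\perp$, i.e.\ when $p_{H}a\in\underline r$. In that case $\langle p_{\underline r^\perp}\beta_i,a\rangle=\pm\epsilon\,\langle n,a\rangle$ has opposite signs for $i=1,2$, so $F_{\underline r}^{\beta_1}\neq F_{\underline r}^{\beta_2}$ and the two $\underline r$-terms do not agree. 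A concrete instance with the standard inner product: $\Gamma=\mathbb Z^3$, $X=[e_1,e_2,e_3,e_1+e_3]$, $H=\langle e_1,e_2\rangle$, $\underline r=\langle e_1\rangle$. Here $a=e_3$ and $a=e_1+e_3$ both satisfy $p_H a=0\in\underline r$, so $p_{\underline r^\perp}\beta_1=x_2 e_2+\epsilon e_3$ and $p_{\underline r^\perp}\beta_2=x_2 e_2-\epsilon e_3$ lie in different regular faces of $\underline r^\perp$ whatever $x_0$ you take. Thus the excess terms do not cancel pairwise; they could in principle cancel only in aggregate across several $\underline r$ (including $\underline r=\{0\}$, whose face also flips), which is a nontrivial assertion not addressed in your argument. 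One could try to rescue the approach by choosing the scalar product generically so that $p_H a\notin\underline r$ for every rational $\underline r\subsetneq H$ and every $a\in X\setminus H$, but you do not raise this point, and it would need a separate argument.

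The paper's proof sidesteps this entirely by never passing through Theorem~\ref{before}. It picks a single regular vector $u_{\underline r}$ negative on $x_0$ for each $\underline r\neq H,V$, and then forms two $X$-regular collections $\mathbf{F^1},\mathbf{F^2}$ that coincide for every $\underline r\neq H$ and differ only in the sign of the half-line at $H$. The explicit projector formula of Proposition~\ref{gliop}, $f^j_{\underline r}=\Pi_X^{\underline r,F^j_{\underline r}}\,\mathbf{\Pi}_{\dim\underline r}^{\mathbf{F^j}}f$, shows at once that $f^1_{\underline r}=f^2_{\underline r}$ for $\underline r\neq H,V$, because $\mathbf{\Pi}_i^{\mathbf F}$ only depends on the faces attached to subspaces of dimension $<i\leq\dim V-1$ and these agree. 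By forcing the two collections to differ at a single slot, the paper gets the pairwise equality for free; your construction does not control the other slots, which is exactly where it breaks.
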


\begin{proof}

Let $x_0$ be a point   in the relative interior of $\overline
\tau_1\cap \overline \tau_2$ in  $H$.  Then  $x_0$  does not
belong to any $X$-rational hyperplane different from $H$ (see
Figure \ref{tau12}). Therefore  we can choose a regular vector
$u_{\underline r}$ for $X\setminus\underline r$ for every rational subspace $\underline r$ different from $H,V$
such that $u_{\underline r}$ is negative on $x_0$.  By continuity,
 there are points $x_1\in\tau_1$ and $x_2\in\tau_2$  sufficiently close to $x_0$ and where these elements  $u_{\underline r}$ are still negative.
We choose $u\in H^{\perp}$ positive on $\tau_1$.
Consider the sequences  ${\bf u^1}=(u^1_{\underline r} )$
where $u^1_{\underline r} =u_{\underline r} $ for ${\underline r}
\neq H$ and $u^1_H=-u$ and     ${\bf
u^2}=(u^2_{\underline r} )$ where $u^2_{\underline r}
=u_{\underline r} $ for ${\underline r} \neq H$ and $u^2_H=u$. Correspondingly we get two $X$-regular collections ${\bf F^1}$ and ${\bf F^2}$.

For $i=1,2$ let $f=f^i_V+ f^i_H+\sum_{\underline r\neq H,V}
f^i_{\underline r}$ be  the ${\bf F^i}$ de\-com\-po\-si\-tion of
$f$.

We write $f^1_H=\mathcal P_{X\setminus H}^{-F_{H}}*q^{(1)}$ with $q^{(1)}\in
DM(X\cap H)$. Now the sequence  ${\bf u^1}$ takes a negative value at
the point $x_1$ of $\tau_1$, thus  by
Theorem \ref{larcarsexplicit}, the component $f^1_V$ is equal
to $f^{\tau^1}$. By Proposition \ref{recurrence},
$$\nabla_{X\setminus H}
f=q^{(1)}+\sum_{\underline r\subset H, \underline r\neq H}
\nabla_{X\setminus H} f^1_{\underline r}$$ is the ${\bf F^1}$
de\-com\-po\-si\-tion of $\nabla_{X\setminus H} f$ so that, again  by Theorem
\ref{larcarsexplicit},
$q^{(1)}=(\nabla_{X\setminus H}f)^{\tau_{12}}$. We thus have $f_V^1=f^{\tau_1}$  and
$f^1_H={\mathcal P}_{X\setminus H}^{-F_{H} }*(\nabla_{X\setminus H}f)^{\tau_{12}}$.

Similarly  ${\bf u^2}$ takes a negative value at
the point $x_2$ of $\tau_2$, so $f^2_V=f^{\tau_2}$  and $f^2_H={\mathcal P}_{X\setminus H}^{F_{H}
}*(\nabla_{X\setminus H}f)^{\tau_{12}}$.

Now from Proposition \ref{gliop},   when $\dim(\underline r)=i$,
$$f^1_{\underline r}= \Pi_{X}^{\underline r,F^1_{\underline r}}{{\bf \Pi}_i^{\bf F^1}}f,\hspace{1cm}
f^2_{\underline r}= \Pi_{X}^{\underline r,F^2_{\underline r}}{{\bf \Pi}_i^{\bf F^2}}f,$$
and,  for  any $i<\dim V$, the operators   ${{\bf \Pi}_i^{\bf F^1}}$ and   ${{\bf \Pi}_i^{\bf F^2}}$ are equal.
Thus $f^1_{\underline r}=f^2_{\underline r}$ for $\underline
r\neq V,H$. So we obtain $f^1_V+f^1_H=f^2_V+f^2_H$, and  our
formula.

\end{proof}

Consider now the case where $X$ spans a pointed cone. Let us
interpret Formula (\ref{wallcrossing}) in the case in which
$f=\mathcal P _X$. We know that for a given tope $\tau$, $\mathcal
P _X$ agrees with a quasi--polynomial $\mathcal P _X^\tau$ on
$\tau-B(X)$. Recall that $\nabla_{X\setminus H}(\mathcal P
_X)=\mathcal P _{X\cap H}$ as we have seen in Lemma \ref{lacomin}.
It  follows that given two adjacent topes $\tau_1,\ \tau_2$ as
above, $(\nabla_{X\setminus H} f)^{\tau_{12}}$ equals $(\mathcal P
_{X\cap H})^{\tau_{12}}$ (extended by zero outside $H$). So we
deduce the identity
\begin{equation}\label{lidemer}
\mathcal P _X^{\tau_1}-\mathcal P _X^{\tau_2}= ({\mathcal P }_{X\setminus H}^{F_{H}}
-{\mathcal P }_{X\setminus H}^{-F_{H}})*\mathcal P _{X\cap H}^{\tau_{12}}.
\end{equation}
This is Paradan's  formula  (\cite{P1}, Theorem 5.2).

\begin{example} Assume $X=[a,b,c]$ as in Remark \ref{abc}.
We write $v\in V$ as $v=v_1 \omega_1+v_2\omega_2$.
 Let $\tau_1=\{
v_1>{v_2}>0\}$, $\tau_2=\{ 0<v_1<v_2\}$. Then one easily (see
Figure \ref{uu}) sees that
$$\mathcal P _X^{\tau_1}=({n_2}+1),
 \hspace{1cm} \mathcal P _X^{\tau_2}=(n_1+1),
 \hspace{1cm} \mathcal P _{X\cap H}^{\tau_{12}}=1.$$
 Equality (\ref{lidemer}) is equivalent to the following identity of
 series which is easily checked:
$$\sum_{n_1,{n_2}}({n_2}-n_1)x_1^{n_1}x_2^{n_2}
=(-\sum_{n_1\geq 0,{n_2}<0}x_1^{n_1}x_2^{n_2}+\sum_{n_1<
0,{n_2}\geq 0}x_1^{n_1}x_1^{n_2})(\sum_{h}x_1^hx_2^h).$$
\end{example}

Recall that  a  big cell  is a connected component of the
complement in $V$ of the {\it singular vectors},  which are formed
by the union of all cones $C(Y)$  for all the sublists $Y$ of $X$
which do not span $V$.  A big cell is usually larger than a tope.
See Figure \ref{bigcell} which shows a section of a cone in
dimension $3$ generated by $3$ independent vectors $a,b,c$. Here
$X=[a,b,c, a+b+c]$. On the drawing, the vertices $a$,$b$,$c$,$d$
represents the intersection of the section with the half lines
$\mathbb R^+ a$, $\mathbb R^+ b$, $\mathbb R^+ c$, $\mathbb R^+
d$.

\begin{figure}[!h]
\begin{center}\includegraphics{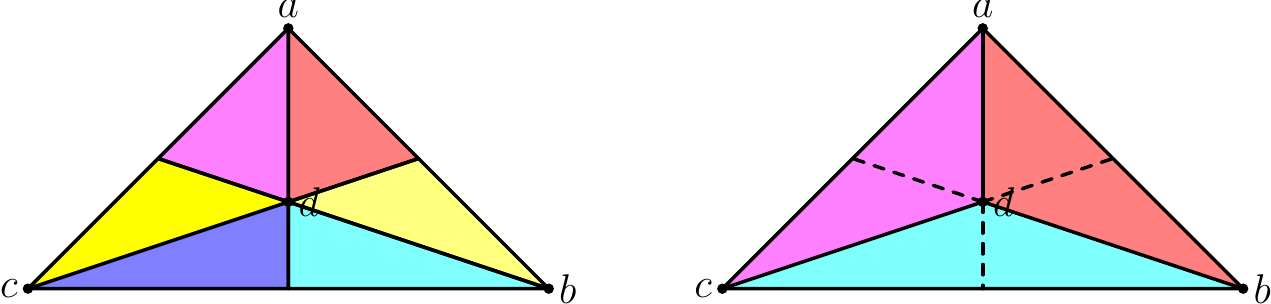}
\end{center}\caption{Topes and cells inside  $C(X)$ for  $X:=[a,b,c,d:=a+b+c]$}\label{bigcell}
\end{figure}

Let us now consider a big cell $\mathfrak c$. We need

\begin{lemma}\label{copert}
Given a big cell $\mathfrak c$,
  let $\tau_1,\dots,\tau_k$ be all the topes contained in $\mathfrak c$.  Then:
$$\mathfrak c-B(X)=\cup_{i=1}^k(\tau_i-B(X)  ).$$
\end{lemma}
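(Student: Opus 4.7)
My plan is to show the nontrivial inclusion $\mathfrak{c}-B(X)\subseteq\bigcup_i(\tau_i-B(X))$; the reverse inclusion is immediate from $\tau_i\subseteq\mathfrak{c}$.

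First I would record the structural observation that every tope is contained in a unique big cell, so that the topes inside $\mathfrak{c}$ partition (up to a measure-zero set) the big cell. The point is that each singular cone $C(Y)$, with $Y\subsetneq X$ non-spanning, lies in the proper rational subspace $\mathrm{span}(Y)$, which is a hyperplane (or smaller) of the arrangement used to define topes. Thus the boundary of $\mathfrak{c}$ sits inside the union of the arrangement hyperplanes, so any connected subset of $V\setminus\bigcup_{H}H$ that meets $\mathfrak{c}$ lies entirely in $\mathfrak{c}$. In particular the topes $\tau_i\subseteq\mathfrak{c}$ cover $\mathfrak{c}$ minus a finite union of hyperplane pieces.

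Next, fix $x\in\mathfrak{c}-B(X)$ and write $x=c-\sum_{j=1}^m s_j a_j$ with $c\in\mathfrak{c}$ and $s_j\in[0,1]$. Consider the affine map
\[
\phi\colon [0,1]^m\to V,\qquad \phi(t):=x+\sum_{j=1}^m t_j a_j,
\]
so that $\phi(s)=c\in\mathfrak{c}$. Since $\mathfrak{c}$ is open, $U:=\phi^{-1}(\mathfrak{c})$ is a nonempty open subset of $[0,1]^m$. For any hyperplane $H$ of the arrangement, with defining linear functional $\ell$, the preimage $\phi^{-1}(H)$ is cut out inside $[0,1]^m$ by the single affine equation $\ell(x)+\sum_j t_j\ell(a_j)=0$; since $X$ spans $V$, not all $\ell(a_j)$ vanish, so this is a proper affine hyperplane in $\mathbb{R}^m$. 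Therefore the union $\Sigma$ of the preimages of all arrangement hyperplanes is a finite union of proper affine hyperplanes in $\mathbb{R}^m$, and $U\setminus\Sigma$ is dense in $U$, hence nonempty.

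Picking any $t\in U\setminus\Sigma$, the point $\phi(t)$ lies in $\mathfrak{c}$ and avoids every arrangement hyperplane, so $\phi(t)\in\tau$ for some tope $\tau$. By the first paragraph, $\tau\cap\mathfrak{c}\neq\emptyset$ forces $\tau\subseteq\mathfrak{c}$, i.e.\ $\tau=\tau_i$ for some $i$. Writing $x=\phi(t)-\sum_j t_j a_j$ with $\phi(t)\in\tau_i$ and $\sum_j t_j a_j\in B(X)$ exhibits $x\in\tau_i-B(X)$, completing the proof.

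The only delicate step is the density argument, and its only content is verifying that each $\phi^{-1}(H)$ is a proper affine subspace of $\mathbb{R}^m$; this is where the spanning hypothesis on $X$ is used. The rest is bookkeeping.
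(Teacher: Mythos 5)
Your proof is correct and rests on the same two ideas as the paper's: the topes inside $\mathfrak c$ are dense in $\mathfrak c$, and one can perturb a witness $c=v+\sum s_j a_j\in\mathfrak c$ off the arrangement hyperplanes while staying inside $v+B(X)$. The paper phrases this directly in $V$ (noting that $v+B(X)$ has nonempty interior, so its intersection with the open set $\mathfrak c$ has nonempty interior and hence meets a tope), whereas you pull the perturbation back to the parameter cube $[0,1]^m$; this is a somewhat more explicit rendering of the same argument, not a genuinely different route.
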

\begin{proof}
Notice that $\cup_{i=1}^k\tau_i$ is dense in $\mathfrak c$. Given
$v\in \mathfrak c-B(X)$, $v+B(X)$ has non empty interior and thus
its non empty intersection with the open set $\mathfrak c$ has non
empty interior. It follows that $v+B(X)$ meets
$\cup_{i=1}^k\tau_i$  proving our claim.
\end{proof}

Now in order to prove the statement for big cells,
 we need to see
what happens when we cross a wall  between two adjacent topes.

\begin{theorem}\label{tbcp}
On  $\mathfrak c-B(X) $,
 the  partition function $\mathcal P _X$
agrees with a quasi--polynomial $f^{\mathfrak c}\in DM(X)$.
\end{theorem}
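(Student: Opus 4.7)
The plan is to combine Lemma \ref{copert} with the wall crossing formula (\ref{lidemer}) through a connectedness argument inside $\mathfrak{c}$. By Lemma \ref{copert}, $\mathfrak{c}-B(X)=\bigcup_i(\tau_i-B(X))$ as $\tau_i$ ranges over the topes contained in $\mathfrak{c}$, and Theorem \ref{larcarsexplicit} applied to $\mathcal P_X\in\mathcal F(X)$ gives, on each $\tau_i-B(X)$, the identity $\mathcal P_X=\mathcal P_X^{\tau_i}$ with $\mathcal P_X^{\tau_i}\in DM(X)$. It will therefore suffice to show that the quasi-polynomial $\mathcal P_X^{\tau_i}$ is the same for every tope $\tau_i\subset\mathfrak{c}$; this common value is then the desired $f^{\mathfrak{c}}$.

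Since $\mathfrak{c}$ is open and path-connected, any two topes $\tau_i,\tau_j\subset\mathfrak{c}$ can be joined by a path $\gamma\subset\mathfrak{c}$. After a generic perturbation, $\gamma$ meets only codimension-one walls and crosses a sequence $\sigma_0=\tau_i,\sigma_1,\ldots,\sigma_k=\tau_j$ of topes with each consecutive pair adjacent across a rational hyperplane $H_l$. The crossing point $p_l$ on the wall $W_l=\overline{\sigma_l}\cap\overline{\sigma_{l+1}}\subset H_l$ lies in $\mathfrak{c}$, hence in the complement of the singular locus. The wall crossing formula (\ref{lidemer}) applied to $\mathcal P_X$ gives
$$\mathcal P_X^{\sigma_l}-\mathcal P_X^{\sigma_{l+1}}=\bigl(\mathcal P_{X\setminus H_l}^{F_{H_l}}-\mathcal P_{X\setminus H_l}^{-F_{H_l}}\bigr)*\mathcal P_{X\cap H_l}^{\tau_{12}^l},$$
so it is enough to establish that $\mathcal P_{X\cap H_l}^{\tau_{12}^l}=0$ for each $l$.

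This vanishing is the main obstacle, and hinges on a geometric observation. Since $X\cap H_l$ does not span $V$, the cone $C(X\cap H_l)\subset H_l$ is part of the singular locus; as $p_l$ is outside the singular locus, $p_l\in W_l\setminus C(X\cap H_l)$. The tope $\tau_{12}^l$ of the arrangement $X\cap H_l$ inside $H_l$ is by definition the one containing $W_l$, hence it contains $p_l$ and so has a point outside $C(X\cap H_l)$. Because the boundary of $C(X\cap H_l)$ inside $H_l$ is a union of hyperplanes generated by subsets of $X\cap H_l$, every tope of this arrangement lies either entirely in or entirely outside $C(X\cap H_l)$; thus $\tau_{12}^l$ must lie outside.

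To finish, $\mathcal P_{X\cap H_l}$ is supported in $C(X\cap H_l)$, and since $\tau_{12}^l$ is separated from this closed cone by one of the arrangement hyperplanes in $H_l$ while $B(X\cap H_l)$ is compact, the translate $\tau_{12}^l-B(X\cap H_l)$ contains an unbounded region in $H_l$ disjoint from $C(X\cap H_l)$, on which $\mathcal P_{X\cap H_l}$ vanishes. This region is large enough to pin down a quasi-polynomial on $\Gamma\cap H_l$, so the quasi-polynomial extension $\mathcal P_{X\cap H_l}^{\tau_{12}^l}$ is identically zero. Substituting back, $\mathcal P_X^{\sigma_l}=\mathcal P_X^{\sigma_{l+1}}$ for each $l$, hence $\mathcal P_X^{\tau_i}=\mathcal P_X^{\tau_j}$, and the theorem follows with $f^{\mathfrak{c}}:=\mathcal P_X^{\tau_i}$.
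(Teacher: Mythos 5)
Your proof is correct and follows essentially the same route as the paper's: reduce via Lemma \ref{copert} and connectivity of $\mathfrak c$ to comparing adjacent topes, apply the wall–crossing formula (\ref{lidemer}), and kill the correction term by noting that $C(X\cap H)$ consists of singular vectors and is therefore disjoint from the tope $\tau_{12}$, forcing $\mathcal P_{X\cap H}^{\tau_{12}}=0$. You merely spell out more explicitly the path argument across walls and the ``all-or-nothing'' dichotomy for topes relative to $C(X\cap H)$, which the paper leaves implicit.
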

\begin{proof} By Lemma \ref{copert},
it  suffices to show that given two adjacent topes $\tau_1,\
\tau_2$ in $\mathfrak c$, $\mathcal P _X^{\tau_1}=\mathcal P
_X^{\tau_2}$.

But now notice that the positive cone spanned by $X\cap H$,
support of $\mathcal P _{X\cap H}$,  is formed of singular
vectors and therefore it is disjoint from $\mathfrak c$ by
definition of big cells.  Therefore $\mathcal P _{X\cap H}$
vanishes on $\tau_{12}$. Thus  $\mathcal P _{X\cap
H}^{\tau_{12}}=0$ and our claim follows from Formula
(\ref{lidemer}).
\end{proof}

This theorem was proven \cite{DM} by Dahmen-Micchelli for topes,
and by Szenes-Vergne \cite{SV1} for cells. In many cases, the sets
 $\mathfrak c-B(X) $ are the maximal domains of quasi--polynomiality for $\mathcal
 P_X$.

\begin{remark}
If  $\mathfrak c$ is a big cell contained in the cone $C(X)$, the open set
$\mathfrak c-B(X)$ contains $\overline{\mathfrak c}$ so that the
quasi--polynomial $f^{\mathfrak c}$ coincides with $\mathcal P _X$
on $\overline{\mathfrak c}$.

This is usually not so for $f\in \mathcal F(X)$ and a tope $\tau$ : the function $f$
does not usually coincide with  $f^{\tau}$ on $\overline{\tau}$.
Figure  \ref{uuuu} describes the  function  $f:=-\mathcal P_X^{F_{\underline r}}$
in $\mathcal F(X)$ with $X:=[a,b,c]$ as in Remark \ref{abc} and
$u\in F_{\underline r}$ strictly negative on $a,b$. We see for example that $f$ is
equal to $0$ on the set $n_1=0, n_2\leq 0$ which is in the closure
of the tope $\tau_3:=\{v_2<v_1<0\}$ while the quasi--polynomial
$f^{\tau_3}=(n_1+1)$ takes the value $-1$ there.
\begin{figure}
\begin{center}\includegraphics{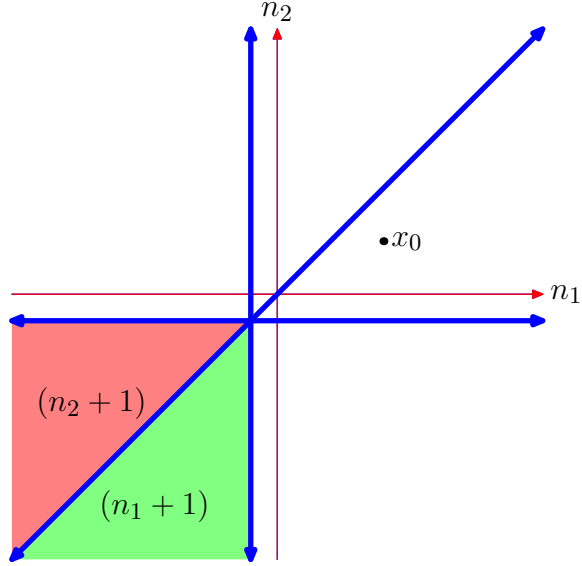}
\end{center}
\caption{ The  function $f$}\label{uuuu}
\end{figure}

\end{remark}

We finally give a  formula for  $\mathcal P_X$ due to Paradan.

Let us choose a scalar product on $V$.
We use the notations of Theorem \ref{before}.
As $\nabla _{X\setminus \underline r}\mathcal P_X=\mathcal P_{X\cap \underline r}$, we obtain as a corollary of
Theorem \ref{before}

\begin{theorem} \label{beautiful}(Paradan).
 Let  $\beta\in V$  be
 generic   with respect to all the rational subspaces $\underline r$. Then, we have
$$\mathcal P_X=\sum_{\underline r\in  S_X} {\mathcal
P}_{X\setminus\underline r}^{-F_{\underline r }^\beta}*(\mathcal P_{X\cap \underline r})^{\tau(p_{\underline r}\beta)}.$$
\end{theorem}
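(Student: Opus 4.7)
The plan is to derive this as a direct specialization of Theorem \ref{before} applied to $f=\mathcal P_X$. The hypothesis that $X$ generates a pointed cone ensures $\mathcal P_X$ is well defined; moreover $\mathcal P_X$ can be written as $\mathcal P_X^F$ for a regular face $F$ placing all of $X$ on the positive side, so Lemma \ref{lacomin}(i) gives $\mathcal P_X\in\mathcal F(X)$. Thus Theorem \ref{before} applies, and produces the decomposition
\[
\mathcal P_X=\sum_{\underline r\in S_X}\mathcal P_{X\setminus\underline r}^{-F_{\underline r}^\beta}*(\nabla_{X\setminus\underline r}\mathcal P_X)^{\tau(p_{\underline r}\beta)}.
\]

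The only step that remains is the identification $\nabla_{X\setminus\underline r}\mathcal P_X=\mathcal P_{X\cap\underline r}$, which is the key input flagged in the sentence preceding the theorem. I would justify it by noting that $\Theta_X=\Theta_{X\cap\underline r}\cdot\Theta_{X\setminus\underline r}$ as formal series, which at the level of functions reads $\mathcal P_X=\mathcal P_{X\cap\underline r}*\mathcal P_{X\setminus\underline r}$ (both convolutions are well defined because all three functions are supported in the pointed cone $C(X)$). Since the difference operator $\nabla_{X\setminus\underline r}$ may be moved onto either factor of a convolution (see the Remark following the definition of convolution), we get
\[
\nabla_{X\setminus\underline r}\mathcal P_X=\mathcal P_{X\cap\underline r}*\nabla_{X\setminus\underline r}\mathcal P_{X\setminus\underline r}=\mathcal P_{X\cap\underline r}*\delta_0=\mathcal P_{X\cap\underline r},
\]
using the basic recursion $\nabla_{X\setminus\underline r}\mathcal P_{X\setminus\underline r}=\delta_0$ satisfied by the partition function of $X\setminus\underline r$ (regarded as a list in $V/\underline r$, or equivalently in the lattice quotient).

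Substituting this identity into the formula from Theorem \ref{before} yields precisely the claimed expression. There is no real obstacle: the whole content of the theorem is packaged inside Theorem \ref{before}, and the corollary is simply the observation that, for the partition function, the inner pieces $\nabla_{X\setminus\underline r}f$ become the partition functions of the sublists $X\cap\underline r$. The only point that merits care is verifying that the convolution factorization $\mathcal P_X=\mathcal P_{X\cap\underline r}*\mathcal P_{X\setminus\underline r}$ makes sense, which is immediate from pointedness of the cone $C(X)$.
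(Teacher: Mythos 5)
Your proposal is correct and follows exactly the paper's route: the theorem is read off as a corollary of Theorem \ref{before} applied to $f=\mathcal P_X$, together with the identity $\nabla_{X\setminus\underline r}\mathcal P_X=\mathcal P_{X\cap\underline r}$ (which the paper itself attributes to Lemma \ref{lacomin} and the discussion preceding the theorem). The only blemish is your parenthetical about regarding $X\setminus\underline r$ ``as a list in $V/\underline r$'': no quotient is involved --- $\mathcal P_{X\setminus\underline r}$ is simply the partition function of the sublist $X\setminus\underline r$ in $\Gamma$, well defined since $C(X\setminus\underline r)\subset C(X)$ is pointed --- but this aside does not affect the argument.
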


\begin{remark}
The set of  $\beta\in V$
 generic   with respect to all the rational subspaces $\underline r$ decomposes into finitely many open polyhedral cones.

 The decomposition depends only on the cone in which $\beta$ lies.

In this decomposition, the component in $DM(X)$ is the
 quasi-polynomial which coincides with $\mathcal P_X$ on the cell containing $\beta$.

 Finally if  $X$ spans a pointed cone and $\beta $ has negative scalar product with $X$,  the decomposition reduces to  $\mathcal P_X= \mathcal P_X $, the component for $\underline r=\{0\}$. \end{remark}

\section{A second remarkable space}
\subsection{A de\-com\-po\-si\-tion formula}

In this section,  we want to present the analogue for distributions,
the proofs are essentially the same or simpler than in the
previous case, so we skip them.
 We shall freely use the notations of the previous sections.

Let $V$ be a finite dimensional vector space, consider the space
$\mathcal D(V)$ of  distributions  on $V$. We denote by $\delta_0$
the delta distribution on $V$.

$\mathcal D(V)$ is in an obvious way
a module over  the algebra  of distributions with compact support,
 under convolution.
Let now $X:=[a_1,\dots,a_m]$ be a list of non--zero elements of $V$.

\begin{definition}\begin{enumerate}
\item Given a rational subspace $\underline r$,
  we denote  by
$\mathcal D(V, \underline r)$  the set of elements in $\mathcal D(V)$ which vanish
on all test functions vanishing  on  $  \underline r$.

\item Given a vector $a\neq 0$,
 we denote by $\partial_a$ the directional
derivative associated to $a$. For a list $Y$ of non--zero vectors,
we denote   by $\partial_{Y}:=\prod_{a\in Y}\partial_a$.
\end{enumerate}\end{definition}
The restriction map $C_c^{\infty}(V)\to
C_c^{\infty}(\underline r)$ on test functions induces, by duality, an  identification  between the space of  distributions on $\underline r$
and  the space $\mathcal D(V,\underline r)$.

If  $X$ spans $V$,  we consider the space  defined by
Dahmen--Micchelli,  which we denote $D(X)$,   formed
by the  distributions $f\in \mathcal D(V)$  satisfying the system of
differential equations $\partial_Yf=0$,  as $Y$ varies among all cocircuits
of $X$. It is easy to see that an element of $D(X)$ is a
polynomial density $P(x)dx$ on $V$.

\medskip

Assume that $X$ spans a pointed cone. Recall that the
{\it multivariate spline $T_X$}   is the tempered distribution
defined, on test functions $f$, by:
\begin{equation}\label{multiva}
\langle T_X\,|\,f\rangle = \int_0^\infty\cdots\int_0^\infty
f(\sum_{i=1}^mt_i a_i)dt_1\cdots dt_m.
\end{equation}

If   $W$ is the span of $X$ and if we choose a Lebesgue measure
$dx$ on $W$,
    we may interpret  $T_X$ as a function on
$W$ supported in the cone $C(X)$ by writing  $\langle
T_X\,|\,f\rangle =\int_{W}f(x)T_X(x)dx.$

If  $Y$ is a sublist of $X$,   one has that $\partial_{X\setminus
Y}T_X=T_Y$.\smallskip

 We next define
 the vector space of interest for this section:
\begin{equation}
\label{ilpp}  G(X):=
\{f\in \mathcal D(V)\,|\,
\partial_{X\setminus\underline r}f\in \mathcal D(V, \underline r),
\text{ for all } \underline r\in S_X\}.
\end{equation}

\begin{lemma}\label{cont}
\begin{enumerate}
\item If $X$ generates a pointed cone, the  multivariate spline
$T_X $ lies in $G(X)$. \item The space $D(X)$ is contained in
$G(X)$.
\end{enumerate}\end{lemma}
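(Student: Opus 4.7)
The plan is to imitate the short proof of Lemma~\ref{lacomin}, replacing the difference operators $\nabla_a$ and partition-type functions by directional derivatives $\partial_a$ and multivariate splines, and using the two stated facts: that $\partial_{X\setminus Y}T_X = T_Y$ for any sublist $Y\subset X$, and that the restriction map identifies distributions on $\underline r$ with $\mathcal D(V,\underline r)$.

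For part (i), I would take a rational subspace $\underline r\in S_X$ and apply the derivative identity with $Y=X\cap\underline r$ (which is a sublist of $X$, so the identity applies): this immediately yields
\[
\partial_{X\setminus\underline r}\,T_X \;=\; T_{X\cap\underline r}.
\]
It then remains to observe that $T_{X\cap\underline r}$ lies in $\mathcal D(V,\underline r)$. This follows from the defining integral formula (\ref{multiva}): for any test function $f$ vanishing on $\underline r$, we have $f(\sum t_i a_i)=0$ for every $a_i\in X\cap\underline r$ and all $t_i\geq 0$, so $\langle T_{X\cap\underline r},f\rangle=0$. Equivalently, $T_{X\cap\underline r}$ is a locally integrable density on the subspace spanned by $X\cap\underline r$, which sits inside $\underline r$.

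For part (ii), the argument is purely formal. Given $f\in D(X)$ and a rational subspace $\underline r\in S_X$, I would split into two cases. If $\underline r=V$, then $X\setminus\underline r=\emptyset$, so $\partial_{X\setminus\underline r}f=f\in \mathcal D(V)=\mathcal D(V,V)$. If $\underline r$ is a proper rational subspace, then because $X$ spans $V$ one can extend a basis of $\underline r$ drawn from $X$ to a basis of $V$ drawn from $X$; removing one such basis vector not in $\underline r$ yields a rational hyperplane $H$ with $\underline r\subset H$. Then $X\setminus H$ is a cocircuit contained in $X\setminus\underline r$, so the factorization
\[
\partial_{X\setminus\underline r} \;=\; \partial_{(X\setminus\underline r)\setminus(X\setminus H)}\,\partial_{X\setminus H}
\]
together with $\partial_{X\setminus H}f=0$ gives $\partial_{X\setminus\underline r}f=0$, which lies trivially in $\mathcal D(V,\underline r)$.

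I do not anticipate any serious obstacle: the only mild point is verifying that $T_{X\cap\underline r}$ truly sits in $\mathcal D(V,\underline r)$ rather than merely being supported in $\underline r$ in a weaker sense, but the explicit integral representation makes this transparent since no transverse derivatives appear. The entire statement is just the distributional shadow of Lemma~\ref{lacomin}, with $\partial_{X\setminus Y}T_X=T_Y$ playing the role of $\nabla_{X\setminus\underline r}\mathcal P_X^F=\mathcal P_{X\cap\underline r}^F$.
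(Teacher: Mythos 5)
Your proposal is correct and matches the paper's own (very terse) proof. For part (i) the paper also just writes $\partial_{X\setminus\underline r}T_X = T_{X\cap\underline r}\in\mathcal D(V,\underline r)$, and for part (ii) it says ``clear from the definitions''; your cocircuit-factorization argument and your remark on why $T_{X\cap\underline r}$ genuinely lies in $\mathcal D(V,\underline r)$ (and not merely has support in $\underline r$) are exactly the details the paper leaves implicit.
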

\begin{proof}
{\it i)}  $\partial_{X\setminus\underline r}T_X= T_{X\cap\underline r}\in
\mathcal D(V, \underline r).$

{\it ii)} is clear from the definition.

\end{proof}

As for the partition functions,
    this lemma is  a very special
case   of Theorem \ref{gestad1} which follows.

Given a rational subspace $\underline r$, take  a regular face $F_{\underline r}$ for $X\setminus \underline r$.
 Divide as before the set  $X\setminus \underline r$ into
two parts $A,B$, of positive and negative vectors on $F_{\underline r}$

We want to define an element $T_{X\setminus \underline r}^{F_{\underline r}}\in \mathcal D(V)$ which is
characterized by the following two properties:
\begin{lemma} There exists a unique element  $T_{X\setminus r}^{F_{\underline r}}$
characterized by the properties $\partial_{X\setminus\underline r} T_{X\setminus r}^{F_{\underline r}}=
\delta_0 $ and $T_{X\setminus r}^{F_{\underline r}}$  is supported in    $ C(A,-B)$.
\end{lemma}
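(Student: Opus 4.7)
The plan is to mirror the proof of Lemma \ref{lafunteta} almost verbatim, replacing formal power series by multivariate splines and geometric-series convergence by well-definedness of convolution of distributions supported in a common proper cone. Write $X\setminus\underline r = A \sqcup B$ as before, where $A$ (resp.\ $B$) consists of the vectors of $X\setminus\underline r$ on which a chosen $u\in F_{\underline r}$ is positive (resp.\ negative). Since the generators of $A$ and of $-B$ all satisfy $\langle u,\cdot\rangle > 0$, each of $C(A)$ and $C(-B)$ is a pointed cone, so the multivariate splines $T_A$ and $T_{-B}$ are well-defined distributions on $V$ supported in these cones. In analogy with formula (\ref{esplicit}), I would define
$$T_{X\setminus r}^{F_{\underline r}} := (-1)^{|B|}\, T_A * T_{-B},$$
and the convolution makes sense because both factors are supported in the closed pointed cone $\{v \in V : \langle u, v\rangle \geq 0\}$.

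Next I would verify the two required properties. Since the support of a convolution is contained in the sum of supports, $T_{X\setminus r}^{F_{\underline r}}$ is supported in $C(A)+C(-B) = C(A,-B)$, giving the second property. For the first, the identity $\partial_{X\setminus Y}T_X = T_Y$ recalled just before the lemma gives, by setting $Y = \emptyset$, the equalities $\partial_A T_A = \delta_0$ and $\partial_{-B}T_{-B} = \delta_0$. Since $\partial_a$ commutes with convolution and $\partial_B = (-1)^{|B|}\partial_{-B}$, one obtains
$$\partial_{X\setminus\underline r}\, T_{X\setminus r}^{F_{\underline r}} = (-1)^{|B|}(\partial_A T_A) * (\partial_B T_{-B}) = (-1)^{|B|}\, \delta_0 * (-1)^{|B|}\delta_0 = \delta_0.$$

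For uniqueness, I would suppose two candidates differ by a distribution $S$ supported in $C(A,-B)$ and annihilated by $\partial_{X\setminus\underline r}$. Since $C(A,-B)$ lies in the pointed half-space $\{\langle u,\cdot\rangle \geq 0\}$, the distribution $S$ admits a Laplace transform $\hat S(\xi)$ holomorphic on an open cone of the dual containing $u$; the hypothesis $\partial_{X\setminus\underline r}S = 0$ then translates to $\bigl(\prod_{a\in X\setminus\underline r}\langle \xi, a\rangle\bigr)\hat S(\xi) = 0$. Since $\langle u, a\rangle \neq 0$ for all $a \in X\setminus\underline r$, the polynomial prefactor is nonzero in a neighborhood of $u$, forcing $\hat S \equiv 0$, and hence $S = 0$. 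The only real delicate point is this Laplace-transform argument for uniqueness, but it is standard for distributions supported in a proper cone; the authors' remark that proofs in this section are ``the same or simpler'' than the discrete case suggests they intend exactly this adaptation.
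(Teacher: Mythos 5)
Your construction is exactly the one in the paper: the authors set $T_{X\setminus r}^{F_{\underline r}}=(-1)^{|B|}T_{[A,-B]}$, which coincides with your $(-1)^{|B|}T_A*T_{-B}$ since the multivariate spline of a concatenation is the convolution of the two splines. Your verification of the support condition and of the equation $\partial_{X\setminus\underline r}T_{X\setminus r}^{F_{\underline r}}=\delta_0$ is correct and is precisely the ``easily seen'' step the paper leaves to the reader.

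The one genuine soft spot is the uniqueness argument. Not every distribution supported in a proper closed convex cone admits a Laplace transform: the function $e^{|x|^2}$ restricted to the positive orthant is a perfectly good distribution supported in a salient cone with no Laplace transform in any tube domain, because the standard theory requires a temperedness or moderate-growth hypothesis that is not part of the hypotheses of the lemma. Since the lemma asserts uniqueness among all distributions, your reduction to $\hat S\equiv 0$ does not apply as stated. This is easily repaired by the same idea in elementary form: if $\partial_a S=0$ and $S$ is supported in a half-space $\{\langle u,\cdot\rangle\ge c\}$ with $\langle u,a\rangle\ne 0$, then $t\mapsto\langle S,\phi(\cdot-ta)\rangle$ is constant for every test function $\phi$, and it vanishes for $|t|$ large since the translate of $\operatorname{supp}\phi$ eventually leaves the half-space; hence $S=0$. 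Peeling off the operators $\partial_a$, $a\in X\setminus\underline r$, one at a time (each intermediate distribution is still supported in $C(A,-B)$, hence in such a half-space) then gives uniqueness. This is the continuous analogue of the recursion that pins down $\mathcal P_{X\setminus\underline r}^{F_{\underline r}}$ in Lemma \ref{lafunteta}, which is evidently what the authors mean by ``the uniqueness is also clear.''
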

\begin{proof}  Set:
$$T_{X\setminus r}^{F_{\underline r}}=(-1)^{|B|}T_{[A, -B]} .$$
It  is easily seen that this element satisfies the two properties.
The uniqueness is also clear.
\end{proof}
Identify  the space of Dahmen--Micchelli   $D(X\cap \underline r)$
with a subspace of  $\mathcal D(V, \underline r)$. Although a distribution
$f\in \mathcal D(V, \underline r)$ may have  non compact support,
 we easily see that the convolution product
$T_{X\setminus r}^{F_{\underline r}}*f$ is well defined. In fact, given any $\gamma\in V$, we
can write $\gamma=\lambda+\mu$ with $\mu\in \underline r, $ and
$\lambda\in C(A,-B)$   only in  a bounded polytope.

The analog of the ``mother formula'' (\ref{eqmother}) of Proposition
\ref{promother} is the following formula.

For $g\in \mathcal D(V,\underline r)$:
\begin{equation}\label{eqmothercontinuous}
\partial_{X\setminus \underline t} (T_{X\setminus r}^{F_{\underline r}}*g) ={T}_{(X\setminus\underline r)\cap \underline t}^{F_{\underline r}}*
(\partial_{(X\cap \underline r)\setminus (\underline t \cap \underline
r)}g).
\end{equation}

Following the same scheme of proof as for Theorem \ref{gesta}, the
following theorem follows:
\begin{theorem}\label{gestad1}
 Choose for every rational space $\underline r$,
  a  regular face $F_{\underline r}$ for  $X\setminus \underline r$. Then:
$$G(X)=\oplus_{\underline r\in S_X}
T_{X\setminus r}^{F_{\underline r}}*D(X\cap \underline r ).$$
\end{theorem}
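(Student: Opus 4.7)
The plan is to mirror the proof of Theorem \ref{gesta} verbatim, replacing the difference operator $\nabla$ by the differential operator $\partial$, the function $\mathcal P_{X\setminus\underline r}^{F_{\underline r}}$ by the distribution $T_{X\setminus r}^{F_{\underline r}}$, the space $\mathcal F$ by $G$, $DM$ by $D$, and the mother formula \eqref{eqmother} by its distributional analog \eqref{eqmothercontinuous}. I would begin by introducing the decreasing filtration
$$
G(X)_i := \bigcap_{\underline t\in S_X^{(i-1)}}\ker\bigl(\partial_{X\setminus \underline t}\bigr),
$$
so that $G(X)_0=G(X)$ and, since the cocircuits of $X$ are exactly the $X\setminus \underline t$ for $\underline t$ a rational hyperplane, $G(X)_{\dim V}=D(X)$.

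The heart of the argument is the distributional analog of Lemma \ref{lalari}: for $\underline r\in S_X^{(i)}$, the operator $\partial_{X\setminus \underline r}$ sends $G(X)_i$ into $D(X\cap \underline r)$, because for any hyperplane $\underline t$ of $\underline r$ and any $f\in G(X)_i$ one has $\partial_{(X\cap \underline r)\setminus \underline t}\partial_{X\setminus \underline r}f=\partial_{X\setminus \underline t}f=0$; and conversely $T_{X\setminus r}^{F_{\underline r}}*g\in G(X)_i$ for every $g\in D(X\cap \underline r)$, via \eqref{eqmothercontinuous} applied to each $\underline t\in S_X^{(i-1)}$ (where $\underline t\cap \underline r$ is proper in $\underline r$ for dimension reasons, so $\partial_{(X\cap \underline r)\setminus (\underline t\cap \underline r)}g=0$). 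From this I would deduce the split short exact sequence
$$
0\to G(X)_{i+1}\to G(X)_i \xrightarrow{\mu_i}\bigoplus_{\underline r\in S_X^{(i)}}D(X\cap \underline r)\to 0,
$$
with $\mu_i f:=\bigoplus_{\underline r}\partial_{X\setminus \underline r}f$ and splitting $\mathbf P_i(\oplus g_{\underline r}):=\sum_{\underline r} T_{X\setminus r}^{F_{\underline r}}*g_{\underline r}$. The verification $\mu_i\mathbf P_i=\mathrm{Id}$ breaks into the diagonal case $\partial_{X\setminus \underline r}(T_{X\setminus r}^{F_{\underline r}}*g_{\underline r})=\delta_0*g_{\underline r}=g_{\underline r}$, using $\partial_{X\setminus\underline r}T_{X\setminus r}^{F_{\underline r}}=\delta_0$, and the off-diagonal case where $\underline t\neq \underline r$ in $S_X^{(i)}$ forces $\underline t\cap \underline r\subsetneq \underline r$, so $(X\cap\underline r)\setminus (\underline t\cap \underline r)$ contains a cocircuit of $X\cap\underline r$ and hence annihilates $g_{\underline r}$, giving $\partial_{X\setminus \underline t}(T_{X\setminus r}^{F_{\underline r}}*g_{\underline r})=0$ via \eqref{eqmothercontinuous}. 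Induction on $i$ then assembles the split pieces into the desired direct sum decomposition.

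The step requiring the most care is the preliminary verification that the convolution $T_{X\setminus r}^{F_{\underline r}}*g$ is well-defined as a distribution on $V$ when $g\in D(X\cap \underline r)$ is an unbounded polynomial density on $\underline r$. This is the distributional counterpart of the lattice-point finiteness argument preceding \eqref{eqmother}, handled by the support observation stated just before \eqref{eqmothercontinuous}: for any compactly supported test function on $V$, the set of pairs $(\mu,\lambda)\in\underline r\times C(A,-B)$ with $\mu+\lambda$ in the support forms a bounded polytope, so the pairing converges and \eqref{eqmothercontinuous} can be applied factor by factor without ambiguity.
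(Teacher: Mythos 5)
Your proposal is correct and follows precisely the route the paper itself prescribes: the authors explicitly state, just before Theorem \ref{gestad1}, that it ``follows'' by repeating the scheme of proof of Theorem \ref{gesta}, and you carry out that translation faithfully, replacing $\nabla$ by $\partial$, $\mathcal P$ by $T$, $\mathcal F$ by $G$, $DM$ by $D$, and the mother formula (\ref{eqmother}) by (\ref{eqmothercontinuous}), including the preliminary check that the convolution $T_{X\setminus r}^{F_{\underline r}}*g$ makes sense.
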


We associate, to a rational space ${\underline r}$ and a  regular face $F_{\underline r}$ for  $X\setminus \underline r$,
 the operator on $G(X)$ defined by
$$\pi_X^{\underline r ,F_{\underline r}}: f\mapsto
T_{X\setminus r}^{F_{\underline r}}*(\partial_{X\setminus {\underline r} }f).$$ This is well defined.
Indeed $\partial_{X\setminus {\underline r} }f$ is supported on
${\underline r} $ so that the convolution is well defined. We see
that $\pi_X^{\underline r ,F_{\underline r}}$ maps $G(X)$ to $G(X)$ and that it
is a projector.

Given a $X$--regular collection ${\bf F}$, we can write an element
$f\in G(X)$ as
$$f=\sum_{\underline r\in S_X}f_{\underline
r}$$ with $f_{\underline r}\in T_{X\setminus r}^{F_{\underline r}}*D(X\cap
\underline r ).$  This expression for $f$ will be called  the
${\bf F}$ de\-com\-po\-si\-tion of $f$.
 In this
de\-com\-po\-si\-tion, the component $f_V$ is in $D(X).$

The space $T_{X\setminus r}^{F_{\underline r}}*D(X\cap \underline r )$
 will be referred to as {\it the $F_{\underline r}$-component} of $G(X)$.

\smallskip

Let ${\bf F}$  be a $X$--regular collection.  One can write in the same way as in Proposition \ref{gliop} the explicit projectors  to the  various components.
\subsection{Polynomials}

Let $f\in G(X)$ and let $\tau$ be a tope.
\begin{proposition}[Localization theorem]\label{larcarsexplicit1}
   Let ${\bf F}$ be a $X$-regular    collection non-positive on $\tau$.
 Let $f=\sum f_{\underline
r}$ be the ${\bf F}$-de\-com\-po\-si\-tion of $f$. Then the
component $f_V$ of this de\-com\-po\-si\-tion  is a  polynomial
density in $D(X)$ such that $f=f_V$ on $\tau$.
\end{proposition}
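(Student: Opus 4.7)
The plan is to mimic the proof of Theorem \ref{larcarsexplicit} verbatim, with two simplifications: first, since we work with distributions on $V$ rather than lattice functions, there is no need for the zonotope correction $B(X)$ (the spline $T_{X\setminus r}^{F_{\underline r}}$ has support exactly in the cone $C(A,-B)$, with no shift by $-\sum_{b\in B} b$); second, the ``polynomial density'' statement is automatic since every element of $D(X)$ is already known to be a polynomial density.

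First I would write, by Theorem \ref{gestad1}, $f = f_V + \sum_{\underline r \neq V} f_{\underline r}$, where $f_V \in D(X)$ and, for each proper rational subspace $\underline r$, one has $f_{\underline r} = T_{X\setminus r}^{F_{\underline r}} * k_{\underline r}$ with $k_{\underline r}\in D(X\cap \underline r)$. Since $k_{\underline r}$, viewed inside $\mathcal D(V,\underline r)$, is supported in $\underline r$, and $T_{X\setminus r}^{F_{\underline r}}$ is supported in $C(F_{\underline r}, X\setminus \underline r)$, the convolution $f_{\underline r}$ is supported in the closed convex polyhedron
\[
\Sigma_{\underline r} := \underline r + C(F_{\underline r}, X\setminus \underline r).
\]

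Next I would show $\tau \cap \Sigma_{\underline r} = \emptyset$ for every $\underline r \neq V$. The cone $C(F_{\underline r}, X\setminus \underline r) = C(A,-B)$ is generated by vectors in $X$ with signs, and $\underline r$ is spanned by $X\cap \underline r$, so the facets of $\Sigma_{\underline r}$ lie in rational hyperplanes (hyperplanes spanned by subsets of $X$). Since $\tau$ is a tope, it is disjoint from every such hyperplane; hence $\tau$ either lies entirely in the interior of $\Sigma_{\underline r}$ or entirely in its complement. By the non-positivity hypothesis, we can pick $u_{\underline r} \in F_{\underline r}$ and $x_0 \in \tau$ with $\langle u_{\underline r}, x_0 \rangle < 0$. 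Because $u_{\underline r}$ vanishes on $\underline r$ and is non-negative on $C(F_{\underline r}, X\setminus \underline r)$, it is non-negative on all of $\Sigma_{\underline r}$, so $x_0 \notin \Sigma_{\underline r}$. This forces $\tau\cap \Sigma_{\underline r} = \emptyset$.

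It follows that each distribution $f_{\underline r}$ with $\underline r \neq V$ vanishes on the open set $\tau$, so $f$ and $f_V$ agree as distributions on $\tau$. Finally, $f_V \in D(X)$ is a polynomial density, as recalled after the definition of $D(X)$. The only subtle point, which I would state carefully but expect to be routine, is the dichotomy ``tope is either inside or outside $\Sigma_{\underline r}$'': it rests on the fact that the defining hyperplanes of $\Sigma_{\underline r}$ are rational with respect to $X$, so a tope (a connected component of the complement of all such hyperplanes) cannot straddle the boundary of $\Sigma_{\underline r}$. This is exactly the analogous geometric step in Theorem \ref{larcarsexplicit}, and it is the only place where the proof requires any thought.
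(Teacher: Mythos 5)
Your proposal is correct and follows the paper's argument exactly: decompose via Theorem \ref{gestad1}, observe that each $f_{\underline r}$ with $\underline r\neq V$ is supported in $\underline r + C(F_{\underline r}, X\setminus \underline r)$, and use the non-positivity of $\bf F$ together with the rationality of the boundary hyperplanes to conclude $\tau\cap(\underline r + C(F_{\underline r}, X\setminus \underline r))=\emptyset$. The paper's own proof is simply a terser version of this, deferring the disjointness step to ``as in Theorem \ref{larcarsexplicit},'' and you are also right that the absence of the zonotope shift (support is $\tau$ rather than $\tau - B(X)$) is the only substantive difference from the lattice case.
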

\begin{proof}
Write $f=\sum_{\underline r\in S_X}f_{\underline r}$ with
$f_{\underline r}=T_{X\setminus \underline r}^{F_{\underline r}}* k_{\underline r}$ where
$k_{\underline r}\in D(X\cap {\underline r})$. The distribution
$f_{\underline r}=T_{X\setminus r}^{F_{\underline r}}* k_{\underline r}$ is
supported on  ${\underline r} +C(F_{\underline r},X\setminus \underline r)$. As in
Theorem \ref{larcarsexplicit},  we know that $\tau\cap
({\underline r} +C(F_{\underline r} ,X\setminus \underline r))=\emptyset$.\end{proof}

\begin{remark}
Thus the distribution $f$ is a locally polynomial density on $V$. In particular this is a tempered distribution. Here the distribution  $f$ coincides with  the polynomial density
$f_V$ only on $\tau$ and not on the bigger open set $\tau-B(X)$.
This extension property is replaced the regularity property that
$f$ is of class $C^{r-1}$, where $r$ is the minimum of the
cardinality of the cocircuits of $X$ (see \cite{dp1}).

\end{remark}

We shall denote by $f^\tau$  the polynomial density in $D(X)$
coinciding with $f$ on the tope $\tau$.\smallskip

Let $H$ be a rational hyperplane,  let $u\in U$ be an equation
of the hyperplane and $F_H$ the half-line containing $u$.
\begin{lemma}
If $q\in D(X\cap H)$, then  $w:=(T_{X\setminus H}^{F_{H}}- T_{X\setminus H}^{-F_{H}})*q$ is an
element of $D(X)$.
\end{lemma}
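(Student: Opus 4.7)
The plan is to mirror the proof of the analogous discrete statement (the lemma appearing just before equation (\ref{lidemer})), replacing the difference operators $\nabla$ by differential operators $\partial$ and the series $\mathcal P^{F_H}_{X\setminus H}$ by the multivariate splines $T^{F_H}_{X\setminus H}$. Since $D(X)$ is defined by the cocircuit equations $\partial_Y f=0$, it suffices to check that $\partial_{X\setminus H'}w=0$ for every rational hyperplane $H'$ of $V$.

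If $H'=H$, I will use the defining property $\partial_{X\setminus H}T^{\pm F_H}_{X\setminus H}=\delta_0$, whence
$$
\partial_{X\setminus H}(T^{\pm F_H}_{X\setminus H}*q)=\delta_0*q=q,
$$
and subtracting gives $\partial_{X\setminus H}w=q-q=0$.

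If $H'\neq H$, I will apply the continuous mother formula (\ref{eqmothercontinuous}) with $\underline r=H$ and $\underline t=H'$, both for $F_H$ and for $-F_H$:
$$
\partial_{X\setminus H'}(T^{\pm F_H}_{X\setminus H}*q)=T^{\pm F_H}_{(X\setminus H)\cap H'}*\bigl(\partial_{(X\cap H)\setminus(H\cap H')}q\bigr).
$$
Because $H$ and $H'$ are distinct rational hyperplanes of $V$, the subspace $H\cap H'$ is proper in $H$, so $X\cap H\cap H'$ cannot span $H$; consequently $(X\cap H)\setminus(H\cap H')$ contains a cocircuit of $X\cap H$. Since $q\in D(X\cap H)$, the corresponding differential operator annihilates $q$, so the right-hand side vanishes. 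Hence $\partial_{X\setminus H'}w=0$ already termwise, and in particular for the difference~$w$.

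The only point requiring a little care, and hence the main (minor) obstacle, is the continuous analogue of Proposition \ref{proar}(iii), i.e., the vanishing in the case $H'\neq H$. Once the mother formula is invoked, this reduces to the elementary linear-algebra observation that the intersection $X\cap H\cap H'$ cannot span $H$ when $H'\neq H$ are both hyperplanes, after which the polynomiality of $q\in D(X\cap H)$ together with its cocircuit equations finishes the argument.
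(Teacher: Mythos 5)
Your proof is correct and follows essentially the same route the paper takes (and, in the discrete case, spells out). You check the cocircuit equations $\partial_{X\setminus H'}w=0$, handle $H'=H$ by the defining property $\partial_{X\setminus H}T^{\pm F_H}_{X\setminus H}=\delta_0$ so the two terms cancel, and handle $H'\neq H$ via the mother formula~(\ref{eqmothercontinuous}) together with the observation that $(X\cap H)\setminus(H\cap H')$ contains a cocircuit of $X\cap H$ annihilating $q\in D(X\cap H)$ — which is exactly the continuous analogue of Proposition~\ref{proar}(iii) that the paper invokes in its proof of the discrete lemma and implicitly relies upon (since the continuous proofs are declared to be ``essentially the same'' and are skipped). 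Your unpacking of that step directly from the mother formula is fine, and no different idea is needed.
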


Let us use  the notations  $\tau_1,\tau_2,H,\tau_{12}$  as in Theorem \ref{wcf}.
 Let $f\in G(X)$. The distribution $\partial_{X\setminus H} f$ is an element of
$G(H\cap X)$, thus by Proposition \ref{larcarsexplicit1}, there
exists a polynomial density $(\partial_{X\setminus H}f)^{\tau_{12}}\in
D(X\cap H)$ on $H$ such that $\partial_{X\setminus H}f$ agrees with
$(\partial_{X\setminus H}f)^{\tau_{12}}$ on $\tau_{12}$.

\begin{theorem}
Let $F_H$ be the half line in $H^{\perp}$ positive on $\tau_1$.  Then
\begin{equation}\label{wallcrossing1}
f^{\tau_1}-f^{\tau_2}= (T_{X\setminus H}^{F_{H}}- T_{X\setminus H}^{-F_{H}})*(\partial_{X\setminus H}f)^{\tau_{12}}.
\end{equation}
\end{theorem}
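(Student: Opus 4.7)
My plan is to mirror the proof of the discrete wall-crossing formula (Theorem \ref{wcf}), making the substitutions $\nabla_a \leadsto \partial_a$, $\mathcal P^{F_{\underline r}}_{X\setminus\underline r} \leadsto T^{F_{\underline r}}_{X\setminus\underline r}$, $DM(X) \leadsto D(X)$, $\mathcal F(X) \leadsto G(X)$, and replacing Theorem \ref{larcarsexplicit}, Proposition \ref{recurrence}, and Proposition \ref{gliop} by their continuous counterparts (which the author has promised at the start of \S3.1 have essentially identical proofs).

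First I would choose $x_0$ in the relative interior of $\overline{\tau}_1\cap\overline{\tau}_2$ inside $H$, so $x_0$ lies on no rational hyperplane other than $H$. For each rational subspace $\underline r\neq H,V$, I pick $u_{\underline r}\in \underline r^\perp$ regular for $X\setminus\underline r$ with $u_{\underline r}(x_0)<0$, and let $u\in H^\perp$ be positive on $\tau_1$. By continuity, there are $x_1\in\tau_1$ and $x_2\in\tau_2$ close to $x_0$ where all chosen $u_{\underline r}$ remain negative. Define two $X$-regular collections $\mathbf F^1,\mathbf F^2$ which agree at every $\underline r\neq H$ but set $F^1_H=\mathbb R_{>0}(-u)$ and $F^2_H=\mathbb R_{>0}u$; then $\mathbf F^i$ is non-positive on $\tau_i$.

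Second, by Theorem \ref{gestad1} I would decompose $f=f^i_V+f^i_H+\sum_{\underline r\neq V,H}f^i_{\underline r}$ with $f^i_H=T^{F^i_H}_{X\setminus H}*q^{(i)}$, $q^{(i)}\in D(X\cap H)$. Proposition \ref{larcarsexplicit1} immediately gives $f^i_V=f^{\tau_i}$. Applying $\partial_{X\setminus H}$ and the continuous analog of Proposition \ref{recurrence} (i.e., a direct consequence of the mother formula (\ref{eqmothercontinuous})), the expression $\partial_{X\setminus H}f=q^{(i)}+\sum_{\underline r\subsetneq H}\partial_{X\setminus H}f^i_{\underline r}$ is the induced $\mathbf F^i$-decomposition in $G(X\cap H)$. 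Since $x_0\in\overline{\tau_{12}}$, the induced collection on $X\cap H$ is non-positive on $\tau_{12}$, so Proposition \ref{larcarsexplicit1} applied inside $G(X\cap H)$ identifies $q^{(i)}=(\partial_{X\setminus H}f)^{\tau_{12}}$, independent of $i$.

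Third, I would argue $f^1_{\underline r}=f^2_{\underline r}$ for every $\underline r\neq V,H$: by the continuous analog of Proposition \ref{gliop}, the projector onto the $F_{\underline r}$-component is $\pi_X^{\underline r,F_{\underline r}}\mathbf\Pi^{\mathbf F}_{\dim\underline r}$, and $\mathbf\Pi^{\mathbf F}_i$ depends only on choices $F_{\underline t}$ with $\dim\underline t\leq i-1$. Since $\mathbf F^1$ and $\mathbf F^2$ differ only at $H$ (which has dimension $\dim V-1$), for $\dim\underline r\leq\dim H$ the operators $\mathbf\Pi^{\mathbf F^1}_{\dim\underline r}$ and $\mathbf\Pi^{\mathbf F^2}_{\dim\underline r}$ coincide, and for $\underline r\neq H$ also $F^1_{\underline r}=F^2_{\underline r}$; hence $f^1_{\underline r}=f^2_{\underline r}$. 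Therefore $f^1_V+f^1_H=f^2_V+f^2_H$, which rearranges to (\ref{wallcrossing1}).

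The main obstacle is not the geometric argument—that is a word-for-word transcription of the discrete case—but ensuring that the structural machinery has been properly set up for distributions: one must have the continuous analogs of Proposition \ref{proar} (surjectivity of $\partial_{X\setminus\underline r}$ and the splitting via $T^{F_{\underline r}}_{X\setminus\underline r}*(\cdot)$), of the exact sequence in Theorem \ref{lemexact}, and hence of Propositions \ref{gliop} and \ref{recurrence}. Once these are recorded for $G(X)$ (with the same proofs), the wall-crossing argument above is automatic.
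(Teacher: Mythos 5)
Your proof is correct and is exactly what the authors intend when they write at the start of \S3.1 that the proofs in the distributional setting ``are essentially the same or simpler'' and therefore omit them: you have supplied the omitted proof by a faithful transcription of the proof of Theorem \ref{wcf}, with the correct substitutions and with the two auxiliary points (non-positivity of the induced collection on $\tau_{12}$, and equality of $\mathbf\Pi_i^{\mathbf F^1}$ and $\mathbf\Pi_i^{\mathbf F^2}$ for $i<\dim V$) spelled out properly.
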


When  $X$ spans a pointed cone, we interpret Formula
(\ref{wallcrossing1}) for $f=T_X$. On a given tope $\tau$,  $T_X$
agrees with a  polynomial density  $T _X^\tau$.

Since $\partial_{X\setminus H}(T _X)=T_{X\cap H}$  we deduce, the
identity
\begin{equation}\label{lidemer1}
T _X^{\tau_1}-T _X^{\tau_2}= (T_{X\setminus H}^{F_{H}}- T_{X\setminus H}^{-F_{H}})*T _{X\cap
H}^{\tau_{12}}.
\end{equation}

Now the statement for big cells:

\begin{theorem}\label{polcell}
On  $\mathfrak c  $,
 the  multivariate spline $T _X$
agrees with a  polynomial density  in $D(X)$.
\end{theorem}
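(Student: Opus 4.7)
The plan is to mimic the proof of Theorem \ref{tbcp} for the partition function, replacing the ingredients by their distributional analogues. The two main tools are already available: Proposition \ref{larcarsexplicit1} tells us that on each tope $\tau\subset V$ the distribution $T_X$ agrees with a polynomial density $T_X^{\tau}\in D(X)$, and the wall--crossing formula (\ref{lidemer1}) governs how $T_X^{\tau}$ changes when we cross a rational hyperplane. An advantage over the discrete case is that no translation by $B(X)$ is needed: $T_X$ is already polynomial on each open tope, so we only have to show that the polynomial $T_X^{\tau}$ does not depend on the choice of tope $\tau\subset\mathfrak c$.

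First I would argue that any two topes $\tau,\tau'\subset\mathfrak c$ can be joined by a finite chain of topes $\tau=\sigma_0,\sigma_1,\dots,\sigma_N=\tau'$, each $\sigma_i\subset\mathfrak c$ and each pair $\sigma_{i-1},\sigma_i$ adjacent along some rational hyperplane $H_i$. This is immediate from the fact that $\mathfrak c$ is open and connected and that its intersection with the union of all rational hyperplanes has empty interior, so one can perturb a path in $\mathfrak c$ from a point of $\tau$ to a point of $\tau'$ so that it crosses the walls transversally one at a time.

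Next, for each such adjacency I apply the wall--crossing formula (\ref{lidemer1}):
\[
T_X^{\sigma_{i-1}}-T_X^{\sigma_i}=\bigl(T_{X\setminus H_i}^{F_{H_i}}-T_{X\setminus H_i}^{-F_{H_i}}\bigr)*T_{X\cap H_i}^{\tau_{i,i-1}},
\]
where $\tau_{i,i-1}$ is the tope for $X\cap H_i$ inside $H_i$ whose closure contains $\overline{\sigma_{i-1}}\cap\overline{\sigma_i}$. The crux is to show $T_{X\cap H_i}^{\tau_{i,i-1}}=0$, and this is the step I expect to be the main obstacle. The argument is exactly the one used in Theorem \ref{tbcp}: the support of $T_{X\cap H_i}$ is the cone $C(X\cap H_i)\subset H_i$, and since $X\cap H_i$ does not span $V$ this cone consists entirely of singular vectors; by the definition of a big cell, $C(X\cap H_i)\cap\mathfrak c=\emptyset$. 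As $\sigma_{i-1},\sigma_i\subset\mathfrak c$ are adjacent along $H_i$, the tope $\tau_{i,i-1}\subset H_i$ lies outside $C(X\cap H_i)$, so $T_{X\cap H_i}$ vanishes on $\tau_{i,i-1}$ and therefore its polynomial representative $T_{X\cap H_i}^{\tau_{i,i-1}}$ is the zero polynomial. Consequently $T_X^{\sigma_{i-1}}=T_X^{\sigma_i}$.

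Iterating along the chain yields $T_X^{\tau}=T_X^{\tau'}$ for every pair of topes in $\mathfrak c$; call this common polynomial density $T_X^{\mathfrak c}\in D(X)$. Since the topes contained in $\mathfrak c$ form an open dense subset of $\mathfrak c$ and $T_X$ is, by Proposition \ref{larcarsexplicit1}, a locally polynomial density that coincides with $T_X^{\mathfrak c}$ on each such tope, the two distributions $T_X$ and $T_X^{\mathfrak c}$ agree on all of $\mathfrak c$. This gives the theorem.
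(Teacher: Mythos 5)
Your proof is correct and follows essentially the approach the paper intends: the paper leaves Theorem \ref{polcell} unproved, referring the reader to the discrete analogue Theorem \ref{tbcp}, whose argument (wall--crossing via Formula (\ref{lidemer}) together with the observation that $C(X\cap H)$ is singular, hence disjoint from $\mathfrak c$, so the wall--crossing term vanishes) you have faithfully transported to the distributional setting using Formula (\ref{lidemer1}) and Proposition \ref{larcarsexplicit1}. The only genuine simplification you correctly point out is that, because $T_X$ is already polynomial on each open tope, one may replace the role of Lemma \ref{copert} by a direct ``chain of adjacent topes inside $\mathfrak c$'' argument and need not involve the zonotope $B(X)$.
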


\begin{remark}
Once Theorems \ref{tbcp} and \ref{polcell} have been proven, it is
easy to deduce from them that the generalized  Khovanskii--Pukhlikov  formula relating volumes and number of points holds
\cite{BriVer97}. Indeed, one can prove it easily sufficiently far
from the walls (cf. \cite{dp1}).
\end{remark}

Finally, using a scalar product, we give Paradan's formula for $T_X$.
\begin{theorem} \label{beautifulbis}(Paradan).
 Let  $\beta\in V$  be
 generic   with respect to all the rational subspaces $\underline r$. Then, we have
$$T_X=\sum_{\underline r\in  S_X}  {T}_{X\setminus\underline r}^{-F_{\underline r }^\beta}*(T_{X\cap \underline r})^{\tau(p_{\underline r}\beta)}.$$
\end{theorem}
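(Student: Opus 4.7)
The plan is to obtain Theorem~\ref{beautifulbis} in exactly the same way that Theorem~\ref{beautiful} was deduced from Theorem~\ref{before}, namely as the specialization to $f=T_X$ of a general ``$\beta$-decomposition'' formula for arbitrary elements of $G(X)$. Since the distribution version of Theorem~\ref{before} is not stated explicitly in the text, my first task is to record it and observe that its proof copies the discrete one verbatim.

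\textbf{Step 1 (distribution analog of Theorem~\ref{before}).} I would prove: for every $f\in G(X)$ and every $\beta\in V$ generic with respect to all rational subspaces $\underline r$,
\begin{equation*}
f=\sum_{\underline r\in S_X}T_{X\setminus\underline r}^{-F_{\underline r}^{\beta}}\,*\,\bigl(\partial_{X\setminus\underline r}f\bigr)^{\tau(p_{\underline r}\beta)}.
\end{equation*}
The argument is the line-by-line transcription of the proof of Theorem~\ref{before}: the genericity hypothesis on $\beta$ guarantees that $\mathbf F=\{-F_{\underline r}^{\beta}\}$ is an $X$-regular collection; by Theorem~\ref{gestad1} we may write $f=\sum_{\underline r}T_{X\setminus\underline r}^{-F_{\underline r}^{\beta}}\ast q_{\underline r}$ with $q_{\underline r}\in D(X\cap\underline r)$; by the distributional version of Proposition~\ref{recurrence} (same proof, using formula~(\ref{eqmothercontinuous}) instead of~(\ref{eqmother})), $q_{\underline r}$ is the $D(X\cap\underline r)$-component of $\partial_{X\setminus\underline r}f\in G(X\cap\underline r)$ relative to the induced collection on $X\cap\underline r$; finally, setting $u_{\underline t}=-p_{\underline t^{\perp}}\beta$, one has $\langle u_{\underline t},p_{\underline r}\beta\rangle=-\|u_{\underline t}\|^{2}<0$ for $\underline t\subset\underline r$, so the induced collection is non-positive on $p_{\underline r}\beta$. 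Localization Theorem~\ref{larcarsexplicit1} then identifies $q_{\underline r}$ with $(\partial_{X\setminus\underline r}f)^{\tau(p_{\underline r}\beta)}$.

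\textbf{Step 2 (specialization).} Apply Step~1 to $f=T_X$, which lies in $G(X)$ by Lemma~\ref{cont}. The identity $\partial_{X\setminus\underline r}T_X=T_{X\cap\underline r}$, already recorded right before the statement of the theorem, shows that $\partial_{X\setminus\underline r}f$ is itself a multivariate spline, hence
\begin{equation*}
(\partial_{X\setminus\underline r}T_X)^{\tau(p_{\underline r}\beta)}=(T_{X\cap\underline r})^{\tau(p_{\underline r}\beta)},
\end{equation*}
which yields the stated formula.

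\textbf{Where the work sits.} Essentially everything is forced by the framework already constructed: the structural Theorem~\ref{gestad1}, the distributional mother formula~(\ref{eqmothercontinuous}), and the localization result Proposition~\ref{larcarsexplicit1} are the only ingredients. The one point that requires mild care, and that I regard as the main (modest) obstacle, is verifying that Proposition~\ref{recurrence} carries over to the distributional setting, i.e.\ that the $\mathbf F$-decomposition commutes with $\partial_{X\setminus\underline t}$ for any rational subspace $\underline t$. This is proved exactly as in the discrete case: from~(\ref{eqmothercontinuous}), $\partial_{X\setminus\underline t}(T_{X\setminus\underline r}^{F_{\underline r}}\ast k_{\underline r})$ vanishes whenever $\underline r\not\subset\underline t$ and equals $T_{(X\setminus\underline r)\cap\underline t}^{F_{\underline r}}\ast(\partial_{(X\cap\underline r)\setminus(\underline t\cap\underline r)}k_{\underline r})$ otherwise, which is precisely the $(X\cap\underline t)$-regular-collection decomposition of $\partial_{X\setminus\underline t}f$. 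Once this is in hand, Step~2 is immediate.
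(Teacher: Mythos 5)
Your proposal is correct and follows exactly the route the paper intends: the authors explicitly state that in the distributional section ``the proofs are essentially the same or simpler than in the previous case, so we skip them,'' and your argument is precisely that transcription — the distributional analog of Theorem~\ref{before} (via Theorem~\ref{gestad1}, the mother formula~(\ref{eqmothercontinuous}), the distributional Proposition~\ref{recurrence}, and Proposition~\ref{larcarsexplicit1}), specialized to $f=T_X$ using $\partial_{X\setminus\underline r}T_X=T_{X\cap\underline r}$.
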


\end{document}